\newcommand{\cc}{\mathbb{C}}
\newcommand{\cir}{\,\text{circ}}
\newcommand{\lcm}{\text{lcm}}
\title{Non-separable matrix builders for signal processing, quantum information and MIMO applications\footnote{MSC 2020 Classification:15A30, 16S50, 05B20, 94A15}
}
\author{ Ted
 Hurley\footnote{National Universiy of Ireland Galway, email:
 Ted.Hurley@NuiGalway.ie }  \, \&
Barry Hurley\footnote{Friar's Hill, Galway,  email: Barryj\_2000@yahoo.co.uk}
}
\date{}
\newcommand{\s}{\sigma}
\begin{document}

\maketitle

 


\begin{abstract} Matrices are built and designed  by applying
  procedures from lower order matrices. Matrix tensor products, direct sums or multiplication of matrices are such procedures and a matrix
  built from these is said  to be a {\em
    separable} matrix. A {\em non-separable} matrix is a matrix  which is not separable and is often referred to  as  {\em an entangled matrix}. The matrices built may retain properties of the lower order matrices or  may also acquire new desired properties not inherent in the constituents.   
  Here design methods for non-separable  matrices of required types are derived. These can
  retain properties of lower order matrices  or  have new desirable
  properties.  Infinite series of required type non-separable matrices are  constructible by the general methods. 
  
  Non-separable  matrices of required types are required for applications and other uses; they can capture the structure in a unique way and thus perform much better than separable matrices.   
  General new methods are developed with which  to construct {\em multidimensional entangled paraunitary
  matrices}; these have applications for wavelet and filter
  bank design. The  constructions are used  to design new systems of non-separable unitary matrices; these have applications in quantum information theory. Some consequences include the design of full diversity constellations of  unitary matrices, which are used in MIMO systems, and methods to design infinite series of special types of Hadamard matrices.

\end{abstract}

\section{Introduction}

Matrices are built and designed from lower order matrices.
A {\em separable matrix} is a matrix built using a direct sum,
tensor product or multiplication.
These procedures preserve many properties of the constituents; 
for example the tensor product or direct sum  of invertible matrices
is invertible  and the tensor product of unitary
matrices is  a unitary matrices. A design procedure may or may not retain properties of the constituents or 
may acquire new properties not inherent in the constituents.  
A
non-separable matrix is often referred to as an {\em entangled matrix}.  
Non-separable matrices with specific properties are required for structures and applications. These are required for specific application purposes in for example 
{\em quantum information theory} 
or {\em signal processing}.  


Building blocks for non-separable matrices to specific requirements and types are presented. These may retain specific properties
of the constituents or, more importantly, may acquire new desired properties not inherent in the constituents. 
The   constructions enable infinite series of entangled matrices of a required type to be
built. 

Building blocks for {\em paraunitary matrices} are  fundamental in {\em signal
  processing}. The concept of a paraunitary matrix is fundamental in
  this area and non-separable  paraunitary matrices are required for {\em better performance}. 
Filter banks play an important role in signal processing but
multidimensional entangled  filter banks have been hard to design.   
In   the huge research area of multirate filterbanks and  wavelets,
paraunitary matrices play a fundamental role, see for example \cite{strang,zhou}. See section \ref{para} below for further background.    

Non-separable unitary matrices have applications in diverse areas such
as for example in {\em quantum information theory},  \cite{quan}. 

Results and constructions on special unitary matrices are carried over
to give new perspectives on the design of Hadamard special
matrices, such as skew Hadamard and symmetric Hadamard matrices, section
\ref{hada}. 

Full diversity sets of constellations of unitary matrices of many
forms and of  good  quality are designed from the constructions,  see
section \ref{spacetime}. These are
required for MIMO (multiple input, multiple output) systems. The ones
  designed here have excellent {\em quality} (a defined concept)   and
  infinite series of 
such   constellations may be designed.    


Non-separable multidimensional systems are designed 
and these  can capture geometric structure rather than those constructed from one dimensional schemes using separable constructs.

Infinite series of such required matrices  can be built from the constructions  and processes  may  be selected.  
Applications to  cryptography are inherent but are not dealt with here. 




\quad
The two basic constructions are
described separately in sections \ref{gyt} and \ref{dita}; these   
 are combined to give additional 
design techniques in  section \ref{joint}. 
Applications  
are expanded on substantially in later sections. The
non-separability/entanglement  concept is important at all stages.

 
A formula for  the determinant of a designed  square matrix is
obtained  in subsection \ref{det} and is  of independent interest;
it may be considered as a major generalisation of the determinant formula 
for a tensor product. This determinantal formula  has a number of applications including the computation of the quality  of full diversity sets of constellations as constructed in Section \ref{spacetime}; these are used in MIMO (multiple input, multiple output) schemes.

\subsection{Notation} Basic algebra notation and background may be
found in many books on matrix theory or linear algebra but  also found extensively online nowadays. Matrices are formed over rings in general including over polynomial rings in particular here. 

$A\T$ denotes the transpose of the matrix $A$. For a matrix $A$ over $\C$, $A^*$ denotes the complex conjugate transposed of $A$; over other rings by convention $A^*=A\T$. Now $I_n$ denotes the identity $n\ti n$ matrix which is also denoted by $I$ when the size is understood. The notation $1_R$ is used for the identity of the ring $R$ which is abbreviated to $1$ when the ring is understood. 
Say   $A$ is a {\em unitary $n\ti n$ matrix} provided $AA^*=I_n$, and say $H$ is a {\em symmetric} (often called  Hermitian) matrix provided $H^*=H$.

A one-dimensional (1D) paraunitary matrix over $\C$ is a square
matrix $U(z)$ satisfying 
$U(z){U}^*(z^{-1}) = 1$.  
In general a $k$-dimensional (kD) paraunitary
matrix over $\cc$ is a matrix $U(\bf{z})$, where ${\bf{z}}=(z_1,z_2,
\ldots, z_k)$ is 
a vector of (commuting) variables $\{z_1,z_2, \ldots, z_k\}$,  such that
$U({\bf{z}}){U}^*({\bf{z}^{-1}}) = 1$ with  the definition 
 ${\bf{z}^{-1}} = (z_1^{-1},z_2^{-1}, \ldots, z_k^{-1})$. 
Over fields other than $\C$ a
 paraunitary matrix  is a matrix $U(\bf{z})$ satisfying 
 $U({\bf{z}}){U}\T({\bf{z}^{-1}}) = 1$.   

 An idempotent matrix $E$ is a matrix satisfying $E^2 = E$. The
 idempotent is {\em symmetric} provided $E^*=E$; idempotents considered  here are symmetric. A complete orthogonal symmetric idempotent (COSI) set
 is a set of $n\ti n$  matrices $\{E_1, E_2, \ldots, E_k\}$ where each $E_i$ is a symmetric idempotent, $E_iE_j=0=E_jE_i$ for $i\neq j$ and $E_1+E_2+\ldots + E_k=I_n$.
 Further definitions are given as required  within sections. Definitions related to constellations of unitary matrices are given in Section \ref{spacetime};  definitions related to special  Hadamard matrices, (real or complex) are given in section \ref{hada}. 


 \section{The constructions}\label{prod}

 The basic designs use (i) COSI sets, section \ref{gyt},  and (ii) methods
 related to Di\c{t}\u{a} type construction, section \ref{dita}. These are then combined. 

\subsection{Design with COSI}\label{gyt}
Methods are now developed with which to design and construct required types of non-separable matrices  using  complete orthogonal symmetric idempotent (COSI)
sets. 
Using COSI sets for constructing series of unitary and paraunitary matrices was initiated in \cite{tedbarry}.   

 
  \begin{proposition}\label{idem} Let $\{E_1,E_2, \ldots, E_k\}$ be a COSI set in $\C_n$. 
Define $G= \left(\begin{smallmatrix} E_{{11}} & E_{{12}} & \ldots & E_{{1k}} \\ E_{{21}} & E_{{22}} & \ldots & E_{{2k}} \\ \vdots & \vdots & \vdots & \vdots \\ E_{{k1}} & E_{{k2}} & \ldots & E_{{kk}} \end{smallmatrix}\right)$  where 
each $E_j$ appears once in each (block) row and once in each (block) column. 
 Then $G$ is a unitary $nk \times nk$ matrix.
  \end{proposition}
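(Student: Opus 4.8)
The plan is to verify directly that $GG^* = I_{nk}$ by computing the $(i,j)$ block entry of the product. Write $G = (G_{ij})$ where each block $G_{ij}$ equals some $E_{\pi(i,j)}$ with $\pi$ a function that, by hypothesis, restricts to a bijection $\{1,\dots,k\}\to\{1,\dots,k\}$ on each fixed row $i$ and on each fixed column $j$ — in other words the block pattern is a Latin-square arrangement of the $k$ idempotents. Since each $E_\ell$ is symmetric, $G^*$ has $(j,i)$ block equal to $E_{\pi(i,j)}^* = E_{\pi(i,j)}$, so the $(i,j)$ block of $GG^*$ is $\sum_{\ell=1}^{k} G_{i\ell} (G^*)_{\ell j} = \sum_{\ell=1}^k E_{\pi(i,\ell)} E_{\pi(j,\ell)}$.

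First I would handle the diagonal case $i = j$: the sum becomes $\sum_{\ell} E_{\pi(i,\ell)}^2 = \sum_\ell E_{\pi(i,\ell)} = \sum_{m=1}^k E_m = I_n$, using idempotence, then the fact that $\ell\mapsto\pi(i,\ell)$ is a bijection (row condition), and finally the completeness property $E_1+\dots+E_k = I_n$ of the COSI set. So the diagonal blocks of $GG^*$ are all $I_n$, giving $I_{nk}$ on the diagonal. Next, for $i\neq j$, I claim every term $E_{\pi(i,\ell)}E_{\pi(j,\ell)}$ vanishes: the point is that for a fixed column $\ell$, the two row-entries in rows $i$ and $j$ must be \emph{distinct} idempotents, because the column condition says $\ell'\mapsto\pi(\ell',\ell)$ is a bijection on the row index, hence injective, so $\pi(i,\ell)\neq\pi(j,\ell)$ when $i\neq j$. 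Then orthogonality $E_aE_b = 0$ for $a\neq b$ kills each summand, so the off-diagonal blocks of $GG^*$ are all $0$.

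The two computations together give $GG^* = I_{nk}$, which is the definition of $G$ being unitary (one should also note $G$ is square of size $nk\times nk$, which is immediate from the block structure). The only real subtlety is making the Latin-square hypothesis precise and using the row condition for the diagonal blocks and the column condition for the off-diagonal blocks — no term survives off-diagonal precisely because within a single block-column the idempotents appearing are all different. I do not anticipate a genuine obstacle here; the argument is a short direct verification once the bijectivity in each row and column is invoked in the right place. If one also wants $G^*G = I_{nk}$ separately, the symmetric argument works with the roles of rows and columns swapped, but since $G$ is square, $GG^* = I$ already suffices.
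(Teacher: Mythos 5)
Your proof is correct and follows essentially the same route as the paper: a direct block computation of $GG^*$, using idempotence together with the row condition and completeness ($E_1+\cdots+E_k=I_n$) for the diagonal blocks, and orthogonality of distinct idempotents for the off-diagonal blocks. The only difference is that you make explicit the Latin-square column condition guaranteeing $\pi(i,\ell)\neq\pi(j,\ell)$ for $i\neq j$, which the paper's proof uses implicitly when it asserts the block inner product of two different block rows is $0$.
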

\begin{proof} Take the block inner product of two different rows of
 blocks. The $E_i$ are orthogonal to one another so the result is
 $0$. Take the block inner product of the row, $j$, of blocks  with
 itself. This gives 
$E_{j1}^2+E_{j2}^2+\ldots + E_{jk}^2 = E_{j1}+E_{j2}+\ldots + E_{jk} = 1 (=I_n)$. Hence  $GG^* = 1 ( =I_{nk})$.
\end{proof}


  A block circulant matrix is one of the form $\left(\begin{smallmatrix} A_1 & A_2 & \ldots &  A_{n} \\ A_n & A_1 & \ldots & A_{n-1} \\ \vdots & \vdots & \vdots & \vdots \\ A_2 & A_3 & \ldots & A_1 \end{smallmatrix}\right)$ where the $A_i$ are blocks of the same size. A reverse circulant block matrix is one of the form $\left(\begin{smallmatrix} A_1 & A_2 & \ldots & A_{n-1} & A_{n} \\ A_2 & A_1 & \ldots & A_{n} & A_1\\ \vdots & \vdots & \vdots & \vdots & \vdots \\ A_n & A_1 & \ldots & A_{n-2} & A_{n-1} \end{smallmatrix}\right)$ where the $A_i$ are blocks of the same size. A circulant block matrix may be transformed into a reverse circulant block matrix by block row operations. 
\\ For example
 $\left(\begin{smallmatrix} E_1& E_2&E_3 & E_4 \\ E_4& E_1&E_2 &E_3\\ E_3&E_4&E_1& E_2 \\ E_2&E_3&E_4&E_1\end{smallmatrix}\right) \longleftrightarrow \left(\begin{smallmatrix} E_1 & E_2 & E_ 3 & E_4 \\ E_2&E_3&E_4&E_1\\ E_3 & E_4 &E_1& E_2 \\ E_4&E_1&E_2&E_3 \end{smallmatrix}\right)$, where $\longleftrightarrow$ here indicates that one can be  obtained from the other by block row operations. The one on the left is block circulant and the one on the right is reverse block circulant. 

      In particular given a COSI set  $\{E_1, E_2, \ldots, E_k\}$, block circulant unitary matrices and block reverse circulant unitary matrices may be formed. Note that the block reverse circulant matrix is {\em symmetric} as the $E_i$ are  symmetric. 

 When  variables are  attached to the $E_i$ a paraunitary matrix is
 obtained; when elements of modulus $1$ are attached to the $E_i$ a
 unitary matrix is obtained.

 For a variable $\al$ define $\al^*=\al^{-1}$.

   \begin{proposition}\label{idem1}  Let $\{E_1,E_2, \ldots, E_k\}$ be
     a COSI set in $\C_n$. \\ Define $G= \left(\begin{smallmatrix} E_{{11}}\al_{11} & E_{{12}}\al_{12} & \ldots & E_{{1k}}\al_{1k} \\ E_{{21}}\al_{21} & E_{{22}}\al_{22} & \ldots & E_{{2k}}\al_{2k} \\ \vdots & \vdots & \vdots & \vdots \\ E_{{k1}}\al_{k1} & E_{{k2}}\al_{k2} & \ldots & E_{{kk}}\al_{kk} \end{smallmatrix}\right)$ 
   where  each $E_k$ appears once in each (block) row and once in each (block) column.

     (i) Let the $\al_{ij}$ be  variables. Then $GG^* = I$ so that $G$ is a paraunitary matrix. 

     (ii) If $|\al_{ij}| =1$ for each $\al_{ij}$, then $G$ is a unitary matrix.
   \end{proposition}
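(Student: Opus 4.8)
The plan is to run exactly the argument used for Proposition~\ref{idem}, carrying the scalars $\al_{ij}$ along. Write $E_{i\ell}$ for the member of the COSI set occupying block position $(i,\ell)$, so the $(i,\ell)$ block of $G$ is $E_{i\ell}\al_{i\ell}$. The Latin-square hypothesis gives two facts I would record at the outset: every block row of $G$ contains each of $E_1,\dots,E_k$ exactly once, and every block column of $G$ contains each of $E_1,\dots,E_k$ exactly once.

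Next I would form the block inner product of block row $i$ with block row $j$ of $G$, namely $\sum_{\ell=1}^{k}(E_{i\ell}\al_{i\ell})(E_{j\ell}\al_{j\ell})^{*}$. Since each $E_s$ is symmetric, $E_{j\ell}^{*}=E_{j\ell}$, and with the convention $\al^{*}=\al^{-1}$ (respectively $\al^{*}=\ov{\al}$ when $|\al|=1$) each summand equals $\al_{i\ell}\al_{j\ell}^{*}E_{i\ell}E_{j\ell}$, the scalars commuting freely past the matrices. For $i\neq j$: the factors $E_{i\ell}$ and $E_{j\ell}$ lie in the same block column $\ell$, so by the column condition they are distinct idempotents of the COSI set, whence $E_{i\ell}E_{j\ell}=0$ and the whole sum vanishes. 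For $i=j$: $\al_{i\ell}\al_{i\ell}^{*}=1$ (this is where, in case (ii), the hypothesis $|\al_{ij}|=1$ is used, while in case (i) it is simply the definition $\al^{*}=\al^{-1}$), and $E_{i\ell}^{2}=E_{i\ell}$, so the sum collapses to $\sum_{\ell=1}^{k}E_{i\ell}=E_1+\dots+E_k=I_n$ by the row condition together with the COSI completeness relation.

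Assembling these block computations gives $GG^{*}=I_{nk}$; since $G$ is a square $nk\times nk$ matrix this also forces $G^{*}G=I_{nk}$. In case (i) this is precisely the statement that $G$ is paraunitary (with $\al_{ij}^{*}=\al_{ij}^{-1}$ playing the role taken by $z_i^{-1}$ in the definition), and in case (ii) it says $G$ is unitary. I do not anticipate a genuine obstacle: the only points needing care are the interpretation of $(\cdot)^{*}$ on the scalar $\al_{ij}$ in the two cases, the commutativity of these scalars with the matrix blocks, and using the column part of the Latin-square hypothesis for the off-diagonal blocks and the row part for the diagonal blocks.
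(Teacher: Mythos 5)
Your proof is correct and is essentially the argument the paper intends: the paper simply states that the proof is "similar to the proof of Proposition \ref{idem}", i.e.\ the same block inner-product computation with the scalars $\al_{ij}$ carried along under the convention $\al^*=\al^{-1}$ (resp.\ $\ov{\al}$ when $|\al|=1$). Your added care in invoking the column part of the Latin-square condition for the off-diagonal blocks and the row part plus completeness for the diagonal blocks just makes explicit what the paper leaves implicit.
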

   
   The proof is similar to the proof of Proposition \ref{idem}.
   
   Block circulant and block reverse circulant matrices may be formed. The reverse circulant block matrix is symmetric provided $\al_{ij} = \al_{ji}^*$.  

    There is no limit on size and large constructions may also
    be formulated iteratively. The designs are direct and efficient. 

   \begin{example}\label{one}
Let $E_0=\frac{1}{2}\left(\begin{smallmatrix} 1& 1 \\ 1& 1
			  \end{smallmatrix}\right), E_1 =
\frac{1}{2}\left(\begin{smallmatrix} 1& -1 \\ -1& 1 \end{smallmatrix}\right)$.
Then $\{E_1,E_2\}$ is a COSI set.
Define  
 $W = \left(\begin{smallmatrix} xE_0 & yE_1 \\ zE_1 &
		 tE_0\end{smallmatrix}\right)=
		 \frac{1}{2}\left(\begin{smallmatrix}x&x & y& -y \\ x&x &
				     -y & y \\ z&-z&t&t \\
				  -z&z&t&t\end{smallmatrix}\right)$. 
Then $WW^*= I_4$. 


   Let 
	$x=1=t=y=z$  in  $W$ and the following matrix is obtained: 
$H= \frac{1}{2}\left(\begin{smallmatrix} 1 & 1 & 1 & -1 \\ 1 & 1 & -1& 1 \\ 1&-1
	     & 1 & 1 \\ -1 & 1 & 1 & 1 \end{smallmatrix}\right)$; this is a
	common matrix used, or given as an example, in quantum theory
        as a non-separable/entangled matrix. 

  \end{example}
\begin{example}\label{two}
Let    $Q_0=\frac{1}{2}\left(\begin{smallmatrix} 1& i \\ -i& 1
			  \end{smallmatrix}\right), Q_1 =
\frac{1}{2}\left(\begin{smallmatrix} 1& -i \\ i& 1 \end{smallmatrix}\right)$.
Then $\{Q_0,Q_1\}$ is a COSI set.
Define $Q = \left(\begin{smallmatrix} xQ_0 & yQ_1 \\ zQ_1 &
		 tQ_0\end{smallmatrix}\right)$.          
Then $Q$ is a paraunitary matrix. Now letting the variables have complex
 values of modulus $1$  gives rise to 
complex Hadamard matrices as for example 
$\left(\begin{smallmatrix} 1 & i & 1 & -i \\ -i & 1 & i& 1 \\ 1&-i
	     & 1 & i \\ i & 1 & -i & 1 \end{smallmatrix}\right)$.  
\end{example}





\begin{example} Consider the matrices in Example \ref{one} where  $E_1=\frac{1}{2}\left(\begin{smallmatrix} 1& 1 \\ 1& 1 \end{smallmatrix}\right), E_2 =
\frac{1}{2}\left(\begin{smallmatrix} 1& -1 \\ -1& 1 \end{smallmatrix}\right)$.
Then $G=\left(\begin{smallmatrix} E_1 & E_2 \\ E_2 & E_1 \end{smallmatrix}\right)$ and $H=\left(\begin{smallmatrix} E_2 & E_1 \\ E_1& E_2 \end{smallmatrix}\right)$ are unitary matrices. Then $2G, 2H$ are Hadamard real $4\ti 4$ matrices.
 Form $F_i=u_iu_i^*$ where  $\{u_1,u_2,u_3,u_4\}$ are the columns of
 $G$ and then $\{F_1,F_2,F_3,F_4\}$ is a COSI set. These may then be
 used to form $16\ti 16$ unitary matrices; the entries are $\pm
 \frac{1}{4}$ and thus $4$ times these matrices are Hadamard $16\ti
 16$ matrices. In particular $G= \left(\begin{smallmatrix}F_1 & F_2 & F_3 & F_4
   \\ F_2 & F_3 & F_4& F_1 \\ F_3 & F_4 & F_1 & F_2 \\ F_4 & F_1 &
   F_2& F_3 \end{smallmatrix}\right)$ is a symmetric unitary matrix and thus $4 G$ is a symmetric Hadamard real matrix.
\end{example}

  Here is a list of some properties of idempotents which are well-known or easily deduced.  
\begin{itemize}
  \item 
    Let $\{u_1, u_2, \ldots, u_k\}$ be an orthonormal set of column
    vectors. Define $E_i=u_iu_i^*$ and then $\{E_1, E_2, \ldots,
    E_k\}$ is an orthogonal symmetric set of idempotents. If $S=\{E_1, E_2, \ldots, E_k\}$ is not complete, set $E= (I - E_1 - E_2 - \ldots - E_k)$ and then $\{E_1, E_2, \ldots, E_k, E\}$ is a COSI set.
  \item If $\{E_1, E_2, \ldots, E_k\}$ is an orthogonal symmetric idempotent set, then \\ $\rank(\sum_{i=1}^k E_i) = \sum_{i=1}^k(\rank E_i)$.
    \item If $E$ is an idempotent of $\rank k$ then $E$ is the sum of $k$ orthogonal idempotents of $\rank 1$. A method for writing such an idempotent as the sum of $\rank 1$ idempotents is given in \cite{ted1}.\footnote{The summation is not unique but a unique expression can be obtained by expressing  the idempotent as the sum of $\rank 1$ idempotents with increasing initial zeros.}   
\item 
      When $U$ is unitary, its columns $\{u_1, u_2, \ldots, u_n\}$
      form an orthonormal basis and thus $\{E_1, E_2, \ldots, E_n\}$
      with $E_i=u_iu_i^*$ is a COSI set which may then be used to form unitary or paraunitary matrices.
    \item If $\{E,F\}$ are orthogonal idempotents then $E+F$ is an idempotent orthogonal to any idempotent which is orthogonal to both $E,F$. Thus if $\{E,F, K_1, K_2, \ldots, K_t\}$ is an orthogonal idempotent set so is $\{E+F, K_1,K_2,\ldots, K_t\}$ and if $\{E,F, K_1,K_2 \ldots, K_t\}$ is a COSI set so is $\{E+F, K_1, \ldots, K_t\}$   
      \end{itemize}
      
Orthogonal idempotents may be combined to form new idempotents and
thus 
elements in a COSI set may be combined to form a new COSI set with a
smaller number of elements but of the same size. This
new COSI set may then be used to design unitary, paraunitary matrices
and others. The following examples illustrate the general method.

Denote the circulant matrix $\left(\begin{smallmatrix} a_1 & a_2 &
  \ldots & a_k \\ a_k & a_1& \ldots & a_2 \\ \vdots & \vdots & \vdots
  & \vdots \\ a_2 & a_3 & \ldots & a_1 \end{smallmatrix}\right)$ by
$\cir (a_1,a_2,\ldots, a_k)$.

Note that if $\om = e^{i\theta}$ then $\om+\om^* = 2\cos \theta$. 
\begin{example} Denote the columns of the $5\ti 5$ normalised Fourier
  matrix by $\{u_0,u_1,u_2,u_3,u_4\}$. Define $E_i=u_iu_i^*$. Then
  $E_i=\frac{1}{5}\cir(1, \om^{4i},\om^{3i},\om^{2i},\om^{i})$ where
  $\om=e^{i \frac{2\pi}{5}}$ is a primitive $5^{th}$ root of $1$ and $\{E_0,E_1,E_2,E_3,E_4\}$ is a COSI set. Now combine $\{E_1, E_4\}$ and $\{E_2,E_3\}$ to get the COSI set $S=\{E_0, E_1',E_2'\}$ where $E_1'= E_1 + E_4, E_2'=E_2+E_3$. The elements in this COSI set are circulant matrices also but in addition have real entries:
  $E_1' = \frac{2}{5}\cir (1, \cos \theta, \cos 2\theta, \cos 3\theta,\cos 4\theta), E_2' = \frac{2}{5}\cir (1, \cos 2\theta, \cos 4 \theta, \cos \theta,\cos 3\theta)$ where $\theta = \frac{2\pi}{5}$. It is noted  that $\cos 4\theta = \cos \theta, \cos 3\theta = \cos 2\theta$  -- which could be deduced from the fact that  $\{E_1',E_2'\}$ are symmetric! 

  This $S$ can then be  used to design unitary and paraunitary matrices with real coefficients as for example $\left(\begin{smallmatrix} E_0 & E_1' & E_2' \\ E_1' & E_2' & E_0 \\ E_2'  & E_0&E_1' \end{smallmatrix}\right)$. 
\end{example}
\begin{example}  Let  $\{u_0,u_1, \ldots, u_5\}$ be the columns  of the normalised Fourier $6\ti 6$ matrix and form $E_i=u_iu_i^*$. Combine $\{E_1, E_5\}$ and $\{E_2,E_4\}$ to obtain the real COSI set $S=\{E_0,E_1',E_3, E_2'\}$ where $E_1'=E_1+E_5, E_2'=E_2+E_4$. 
Now a primitive $6^{th}$ root of $1$ is $\om=e^{i\frac{2\pi}{6}}= \cos \frac{2\pi}{6} + i\sin \frac{2\pi}{6}$ and $\cos \frac{2\pi}{6} = \frac{1}{2}, \cos \frac{4\pi}{6} = -\frac{1}{2}$. Hence $E_0 =\frac{1}{6}\cir(1,1,1,1,1,1), E_3=\frac{1}{6}\cir(1,-1,1,-1,1,-1), E_1'= \frac{1}{6}\cir(2,1,-1,-2,-1,1), E_2'=\frac{1}{6}\cir(2,-1,-1,2,-1,-1)$.

    $S$ may then be used to form unitary and paraunitary matrices with real coefficients.
    \end{example}

The same process may  be applied in general to the normalised Fourier $n\ti n$ matrix to obtain COSI sets with real coefficients.  

\quad

By Propositions \ref{idem} and \ref{idem1}, paraunitary and unitary matrices of size $kn\ti kn$ are designed  from a COSI set $\{E_1,E_2, \ldots, E_k\}$ of $k$ elements of size $n\ti n$. 
The following constructs paraunitary and unitary matrices of size $n\ti n$ from a COSI set of size $n\ti n$. 
          \begin{proposition}\label{newt}\cite{tedbarry}. 
            Let $\{E_1, E_2, \ldots, E_k\}$ be a COSI  set.
            
            (i) Define $U(z) = \di\sum_{j=1}^k\pm E_jz^{t_j}$. Then $U(z)U^*(z^{-1})=I$.
            
            (ii) Let ${\bf{z}}=(z_1,z_2,\ldots, z_k)$ and  $U({\bf{z}}) = \di\sum_{j=1}^k E_jz_j$. Then $U({\bf{z}})U^*({\bf{z}^{-1}})=I$. 

            (iii)  Define $U(z) = \di\sum_{j=1}^k e^{i\theta_j} E_jz^{t_j}$. Then $U(z)U^*(z^{-1})=I$.
\end{proposition}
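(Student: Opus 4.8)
The plan is to expand the product $U(z)\,U^*(z^{-1})$ as a double sum over the index set $\{1,\dots,k\}$ and then invoke the three defining features of a COSI set in order: mutual orthogonality kills every off-diagonal term, idempotency collapses each diagonal term $E_j^2$ to $E_j$, and completeness sums the survivors to $I_n$. The symmetry $E_j^*=E_j$ is exactly what lets the para-conjugation leave each idempotent untouched, and in each of (i)--(iii) the scalar multiplying $E_j$ is chosen so that it cancels against its own para-conjugate, leaving coefficient $1$ on each surviving $E_j$.

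Carrying this out for (i): write $U(z)=\sum_{j=1}^{k}\varepsilon_j E_j z^{t_j}$ with $\varepsilon_j\in\{+1,-1\}$. Since $\varepsilon_j$ is real and $E_j^*=E_j$, the para-conjugate is $U^*(z^{-1})=\sum_{j=1}^{k}\varepsilon_j E_j z^{-t_j}$, hence
\[ U(z)\,U^*(z^{-1})=\sum_{i,j}\varepsilon_i\varepsilon_j\,E_iE_j\,z^{\,t_i-t_j}. \]
Orthogonality gives $E_iE_j=0$ for $i\neq j$, so only the terms $i=j$ remain; there $E_i^2=E_i$, $\varepsilon_i^2=1$ and $z^{\,t_i-t_i}=1$, so the sum is $\sum_i E_i=I_n$ by completeness. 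Parts (ii) and (iii) are the same computation: for (ii) delete the $\varepsilon_j$ and replace the single power $z^{t_j}$ by the independent variable $z_j$, so that the surviving diagonal terms carry $z_iz_i^{-1}=1$; for (iii) replace $\varepsilon_j$ by $e^{i\theta_j}$, and since $(e^{i\theta_j})^*=e^{-i\theta_j}$ the surviving diagonal scalar is again $e^{i\theta_i}e^{-i\theta_i}=1$. (The same argument works verbatim for arbitrary unimodular coefficients, e.g.\ the $\al_{ij}$-type scalars of Proposition~\ref{idem1}.)

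There is no genuine obstacle here; the only step requiring care is the correct reading of the para-conjugation $U(z)\mapsto U^*(z^{-1})$, namely that one conjugate-transposes the matrix coefficients — which fixes each symmetric $E_j$ — conjugates the scalar coefficients, and inverts the variable. Once that is pinned down, the whole proposition reduces to the single observation that any expression $\sum_j c_j E_j z^{t_j}$ built from a COSI set with $|c_j|=1$ is automatically paraunitary, because orthogonality, idempotency and completeness force the product with its para-conjugate to telescope to $\sum_j E_j = I_n$.
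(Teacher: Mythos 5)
Your proof is correct and follows exactly the argument this paper uses for its COSI constructions: the paper itself gives no separate proof of Proposition \ref{newt} (it is cited from \cite{tedbarry}), but its proof of Proposition \ref{idem} is the same expand-and-telescope computation, with orthogonality killing cross terms, idempotency reducing $E_j^2$ to $E_j$, and completeness giving $I$. Your reading of the para-conjugation (conjugate-transpose the symmetric $E_j$, conjugate the unimodular scalars, invert the variables) is the intended one, so nothing is missing.
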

           When the $z$ is replaced by an element of modulus $1$ in
         part   (i) of Proposition \ref{newt},  a unitary matrix is
           obtained. Other versions of Proposition \ref{newt} may be
           formulated, for example by letting some  of the $z_j$ in
           part (ii) of Proposition \ref{newt} be equal. 

     Using COSI sets to design  paraunitary matrices is developed
     further in section \ref{para}. Using COSI sets to construct types of Hadamard matrices is developed in section \ref{hada}. In these sections, the COSI methods are combined with the designs methods of section \ref{dita}.

\subsubsection{Symmetric unitary matrix and further paraunitary matrices using COSI}  
 $U$ is a symmetric unitary matrix if and only if $U=(I - 2 E)$ where
$E$ is a (symmetric) idempotent,  see  \cite{ted1}, Proposition 8. 
This gives  the method for  constructing  a unitary symmetric matrix  from {\em any} idempotent.

Let $E$ be a symmetric idempotent. Then $\{E,I-E\}$ is a COSI set. 
Define  $U=(I-2E)$ which is then a unitary symmetric matrix and every
symmetric unitary matrix is of this form.  Note $(I-2E)E= -E, \, (I- 2E)(I-E) = I-E$ and thus
$(I-2E)$ has eigenvalue $-1$ occurring to multiplicity equal to $\rank
E$ and has eigenvalue $1$ occurring to multiplicity equal to $\rank
(I-E)$. 

The  renowned building blocks for 1D (one dimensional) paraunitary matrices over $\C$ due to
Belevitch and Vaidyanathan as described in
\cite{vaid} are constructed from a complete orthogonal idempotent set
of two elements in this manner. 

The requirement  that $U$ be of a particular type of symmetric matrix  can be  more difficult. Now  $H$ is a symmetric Hadamard matrix if and only if $U=\frac{1}{\sqrt{n}}H$ is a symmetric unitary matrix if and only if  this $U$ has a form $(I-2E)$ for a symmetric idempotent $E$. Thus a search for symmetric Hadamard matrices  could begin with a search for such idempotents. 

Suppose $U$ is any unitary matrix. Then its columns $\{u_1, u_2, \ldots, u_n\}$ give rise to the COSI set $\{E_1,E_2,\ldots, E_n\}$ with $E_i=u_iu_i^*$. Some of the $E_i$ may be combined to form different  COSI sets:  $\sum_{i=1}^kE_{j_i}$ is also a symmetric idempotent, with $J=\{j_1,j_2,\ldots ,j_k\} \subset  \{1,2,\ldots,n\}$,  and this idempotent is orthogonal to each $\{E_j | j\not\in J\}$ or any idempotent formed in this way from $\{E_j | j\not\in J\}$.

\begin{example} Let $E=\frac{1}{3}\left(\begin{smallmatrix} 1 & 1& 1 \\ 1&1&1\\ 1&1&1 \end{smallmatrix}\right), F= \frac{1}{3}\left(\begin{smallmatrix} 1 & \om & \om^2 \\ \om^2 &1 & \om \\ \om&\om^2&1 \end{smallmatrix}\right)$ where $\om$ is a primitive $3^{rd}$ root of unity. Then $\{E,F\}$ are idempotents and $U=(I-2E), V=(I -2F)$ are unitary matrices. Note $K=\sqrt{3}U$ satisfies $KK^*= K^2 = 3I_3$ but is not a Hadamard matrix. Also $UV = VU$ as $E,F$ are orthogonal.
\end{example}

Infinite series of symmetric unitary matrices may be obtained as illustrated in the following example.

\begin{example}\label{nott} Let $\{E_1,E_2\}$ be a COSI set. Then $U=\begin{smatrix} E_1 & E_2 \\ E_2&E_1\end{smatrix}$ is a symmetric unitary matrix. Thus $F_1=\frac{1}{2}(I-U), F_2=\frac{1}{2}(I+U)$ is an orthogonal set of idempotents and so $U_1=\begin{smatrix} F_1 & F_2 \\ F_2&F_1\end{smatrix}$ is a symmetric unitary matrix. Then $\{\frac{1}{2}(I-U_1), \frac{1}{2}(I+U_1)\}$ is a COSI set with which to form symmetric unitary matrices. This process may be continued to produce an infinite series of symmetric unitary matrices. 

  Initial choices for $\{E_1,E_2\}$ include $ \{E_1= \frac{1}{{2}} \begin{smatrix}1&1 \\1&1\end{smatrix}, E_2 = \frac{1}{{2}} \begin{smatrix}1&-1 \\-1&1\end{smatrix}\}$ and  $ \{E_1= \frac{1}{{2}} \begin{smatrix}1&i \\-i&1\end{smatrix}, E_2 = \frac{1}{{2}} \begin{smatrix}1&-i \\i&1\end{smatrix}\}$.
The $\{E_1,E_2\}$ can be of any size and not just $2\ti 2$ matrices and any COSI set may be used initially. 
\end{example}




 
          \begin{example}\label{above} 
          Let $U=\frac{1}{\sqrt{2}} \begin{smatrix} 1 & -1 \\ 1 &
            1 \end{smatrix} = \begin{pmatrix} u_1
            & u_2 \end{pmatrix}$.
          Define  $E_1 = u_1u_1^* =
          \frac{1}{2} \begin{smatrix} 1 & 1 \\ 1 & 1 \end{smatrix},
           E_2 = u_2u_2^*= \frac{1}{2}\begin{smatrix} 1 & -1 \\ -1 &
            1 \end{smatrix}$. 
            Then $U(z) = E_1z^i + E_2z^j$ is a paraunitary
          matrix. $U(z)$ has real entries and is symmetric in that
          $U(z)^*=U^*(z^{-1})=U(z^{-1})$.  Multiplying any two of the  form $U(z)$ using the same COSI set gives another of this form. However different COSI sets may be used to form paraunitary of the form $U(z)$ and these may be combined to give different types of paraunitary matrices. 
\end{example}
\begin{example} This gives an example of the design of a filter bank from COSI sets. 
 A unitary real $2\ti 2$ matrix is of the form $\begin{pmatrix} \cos
   \theta & \sin \theta \\ - \sin \theta & \cos
   \theta \end{pmatrix}$. The above matrix $U$ of Example \ref{above} is of this form  where $\theta = -\frac{\pi}{4}$. Define $E_1= \begin{pmatrix} \cos^2 \theta & -\cos \theta \sin  \theta \\ -\sin \theta \cos \theta & \sin^2 \theta \end{pmatrix}, E_2= \begin{pmatrix} \sin^2 \theta & \cos \theta \sin  \theta \\ \sin \theta \cos \theta & \cos^2 \theta \end{pmatrix}$. Then $\{E_1,E_2\}$ is a COSI set and $U(z) = E_1z^i + E_2z^j$ is a paraunitary matrix. Different $U(z)$ are obtained by taking different values of $\theta$ and these can then be used  to design  other paraunitary matrices of different forms. Paraunitary matrices of the type $A_0 + A_1z + \ldots + A_{2n-1}z^{2n-1}$ are  obtained with real coefficients.    From this a 2-channel filter bank with $n$ taps may be constructed.
\end{example}

\subsubsection{Group ring} The primitive central idempotents, see \cite{sehgal},  of the group ring $\C G$ form a complete orthogonal set of idempotents and these can be realised  as a COSI set in $\C_{n\ti n}$ where $n$ is the order of the group $G$. Interesting unitary and paraunitary matrices may be formed from the group ring $\C G$ of a finite group. 

          The unitary and paraunitary matrices formed  have rational coefficients when $G=S_n$, the symmetric group on $n$ letters, and have real coefficients when $G=D_n$ the dihedral group of order $2n$. Central primitive idempotents may also be combined to give a COSI with real entries 
       as    the idempotents occur in types of conjugate pairs. 
                  Some examples may be found in \cite{tedbarry}. The group ring aspects need to  be investigated further; some ideas for this paper occurred while looking at COSI sets in group rings.
          
\subsection{Di\c{t}\u{a} type}\label{dita} The following
constructions were initiated by  Di\c{t}\u{a}, \cite{dita}, 
and were essentially designed  in order to build  Hadamard matrices from
lower order Hadamard matrices.  They have been rediscovered in various
forms a number of times including by us. The original definition
involved square matrices only and  here it is generalised to work for
non-square matrices and with two `sides', left and right.  







\begin{Definition}\label{tangle}(Di\c{t}\u{a} \cite{dita})
  Let $\{A_1, A_2, \ldots, A_k\}$ be $m\ti n$ matrices and let $U=(u_{ij})$ be a $k\ti k$ matrix. Define {\em the  left matrix  tangle product} of $\{A_1, A_2, \ldots, A_k\}$ relative to $U$ to be the $mk\ti nk$ matrix 

  $ \begin{pmatrix} A_1u_{11} & A_2u_{12} & \ldots & A_ku_{1k} \\ A_1u_{21} & A_2u_{22} & \ldots & A_ku_{2k} \\ \vdots & \vdots & \ldots & \vdots \\ A_1u_{k1} & A_2u_{k2} & \ldots & A_ku_{kk} \end{pmatrix} $

  and {\em the right matrix tangle product} of $\{A_1, A_2, \ldots, A_k\}$ relative to $U$ to be the $mk\ti nk$ matrix 

  $ \begin{pmatrix} A_1u_{11} & A_1u_{12} & \ldots & A_1u_{1k} \\ A_2u_{21} & A_2u_{22} & \ldots & A_2u_{2k} \\ \vdots & \vdots & \ldots & \vdots \\ A_ku_{k1} & A_ku_{k2} & \ldots & A_ku_{kk} \end{pmatrix} $.

  The  notation $( U;A_1,A_2,\ldots, A_k)$ is used for  the left matrix tangle  product and 
$(A_1,A_2,\ldots, A_k;U)$ is used for  the right matrix   tangle product.  From the context it will often be clear which (left or right) matrix tangle  product is being used and in this case the term {\em matrix tangle product} is  utilised.
\end{Definition}
The Di\c{t}\u{a} construction as in \cite{dita,hos,craigen} is given as a left matrix tangle product with square matrices. 
The right tangle product is not equal to the left tangle product but
$(A_1,A_2,\ldots, A_k;U) = (U\T;A_1\T,A_2\T, \ldots, A_k\T)\T$ for
square matrices. It is convenient here for applications to have both
left and right constructions and also for constructions  when the
matrix $U$ is not square, see Definition \ref{tangle1} below. 

A  generalised version of this construction has also been used  but
this is not needed  here. The present constructions are used with a
view to designing non-separable matrices  in particular. 

Definition \ref{dita} can be generalised as follows to the case where $U$ is not square but has size either
$k\ti n$ or $n\ti k$ where $k$ is the number of matrices to be entangled; this requires the left or right matrix tangle product definitions. 
\begin{Definition}\label{tangle1}
(i)   Let $\{A_1, A_2, \ldots, A_k\}$ be $m\ti n$ matrices and let $U=(u_{ij})$ be a $t\ti k$ matrix. Define {\em the  left matrix  tangle product of $\{A_1, A_2, \ldots, A_k\}$ relative to $U$} to be the $tm\ti nk$ matrix 

  $$ \begin{pmatrix} A_1u_{11} & A_2u_{12} & \ldots & A_ku_{1k} \\ A_1u_{21} & A_2u_{22} & \ldots & A_ku_{2k} \\ \vdots & \vdots & \ldots & \vdots \\ A_1u_{t1} & A_2u_{t2} & \ldots & A_ku_{tk} \end{pmatrix} $$

 (ii) Let  $\{A_1, A_2, \ldots, A_k\}$ be $m\ti n$ matrices and let $U=(u_{ij})$ be a $k\ti t$ matrix. Define {\em the right matrix  tangle product of $\{A_1, A_2, \ldots, A_k\}$ relative to $U$} to be the $mk\ti nk$ matrix 

  $$ \begin{pmatrix} A_1u_{11} & A_1u_{12} & \ldots & A_1u_{1t} \\ A_2u_{21} & A_2u_{22} & \ldots & A_2u_{2t} \\ \vdots & \vdots & \ldots & \vdots \\ A_ku_{k1} & A_ku_{k2} & \ldots & A_ku_{kt} \end{pmatrix} $$

 The  notation $(U; A_1,A_2,\ldots, A_k )$ is used for  the left  matrix tangle  product and 
$(A_1,A_2,\ldots, A_k;U)$ is used for  the right  matrix tangle product. From the context it may  be clear which (left or right) tangle  product is being used and in this case the term {\em matrix tangle product} is utilised.
\end{Definition}

The matrix tangle product depends on the order of the $A_i$ and
different tangle products are obtained from  different permutations of
the $A_i$  - `different permutations' should take into account that
some of the $A_i$ may be the same.   This can be particularly useful
in designing series of different entangled  matrices with desired properties.

If all the $A_i=A$ are the same then the matrix tangle   product is the matrix tensor\footnote{Matrix tensor product is often called {\em Kronecker product}. See however \cite{zeh} for discussion on this name.} 
product $U\otimes A$.
The direct sum of matrices is also a very special matrix tangle product as  $\begin{pmatrix} A & 0\\0 & B \end{pmatrix} = (I_2;A,B)$.

Say $U$ is the {\em shuffler} matrix and say $\{A_1,A_2, \ldots, A_k\}$ are the {\em tangle} matrices of the matrix tangle product $(U; A_1,A_2, \ldots, A_k)$ or of $(A_1,A_2, \ldots, A_k;U)$ depending on which, left or right, matrix tangle product is under consideration. Suppose now an $m\ti n$ matrix $U$ is to be a shuffler matrix of a matrix tangle product. Then either $m$ or $n$ matrices are required for the tangles {\em but} they need not all be different. If they are all the same and equal to $A$ then the tensor product   $U\otimes A$ is obtained which is an $mt\ti nq$ matrix when $A$ is $t\ti q$. If less than $n$ or $m$ different matrices are to be used as tangles then these are repeated until $m$ or $n$ matrices are obtained as appropriate.  


The matrix tangle product may be square even though neither the tangles nor the shuffler are square. For example if $\{A,B\}$ are $2\ti 3$ matrices and $U$ is a $3\ti 2$ matrix then $(U;A,B)$ is a $6\ti 6$ matrix. In general if $\{A_1,A_2, \ldots, A_k\}$ are $k\ti t$ matrices  and $U$ is $t\ti k$ then   $(U;A_1, A_2, \ldots, A_k)$ is a $kt\ti kt$ matrix; if  $\{A_1,A_2, \ldots, A_k\}$ are $t\ti k$ matrices  and $U$ is $k\ti t$ then   $(A_1, A_2, \ldots, A_k;U)$ is a $kt\ti kt$ matrix. 


The matrix tangle  product is not a matrix tensor  product unless
there is a fixed $A$ such that 
$A_i=\al_iA$ for some $\al_i$. In this situation  
$(U;\al_1A, \al_2A, \ldots, \al_kA)= (U';A,A,\ldots,A) = U'\otimes A$ where $U'$ is obtained from $U$ by multiplying rows or columns of $U$ by appropriate $\al_i$.  

The matrix tangle product has some linearity:
\begin{itemize} \item $\al(U;A_1, A_2, \ldots, A_k)=(U;\al A_1,\al A_2, \ldots, \al A_k)= (\al U; A_1,A_2, \ldots, A_k)$.   \item $ (U+V;A_1,A_2,  \ldots, A_k) = 
  (U;A_1,A_2, \ldots, A_k) + (V;A_1,A_2, \ldots, A_k)$. \item $(U;A_1, A_2, \ldots, A_k)+ (U;B_1, B_2, \ldots, B_k) = (U;A_1+B_1, A_2+B_2, \ldots, A_k+B_k)$.
   
\end{itemize}

Similar results hold  for the right matrix tangle product.

Note however for example that $(U;A_1+A_1',A_2)$ is not in general the same as $(U;A_1,A_2)+(U;A_1',A_2)$. 

\subsection{Determinant}\label{det}
The determinant value  of a matrix tangle product of square matrices
 in terms of the constituents is interesting and valuable. It can be obtained in terms of  the determinants of the tangles
 and shuffler, see Proposition
 \ref{tom} below. However the spectrum does not have a relationship with the
 spectrums of the constituents,  as happens for  a matrix tensor
 product, as the process produces an {\em entangled} matrix in
 general. 

Let  $T=(U;A_1,A_2,\ldots, A_k)$.  It is of interest  to know the
value of $\det T = |T|$ when the $A_i$ and $U$ are square matrices. It
is given in terms of the determinants of the constituents as follows. 

\begin{proposition}\label{tom} Let $T=(U;A_1,A_2,\ldots, A_k)$ where $U$ is a $k\ti k$ matrix and the $A_i$ are $n\ti n$ matrices. Then $|T| =  |A_1||A_2|...|A_k||U|^n$.
\end{proposition}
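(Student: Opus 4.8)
The plan is to reduce the left matrix tangle product $T=(U;A_1,\dots,A_k)$ to a form whose determinant is computable by standard block-matrix facts, via a single change of variables on the block columns. Writing $T$ in block form, its $(i,j)$ block is $A_j u_{ij}$, so the $j$-th block column of $T$ is the block column $(u_{1j},u_{2j},\dots,u_{kj})^\T \otimes A_j$ — that is, each block column carries a \emph{fixed} tangle matrix $A_j$ scaled by the corresponding column of $U$. The key observation is that $T$ therefore factors as a product: $T = S \cdot (A_1\oplus A_2\oplus\cdots\oplus A_k)$ is not quite right because the $A_j$ sit on the wrong side, but $T = (U\otimes I_n)\cdot \mathrm{diag}(A_1,\dots,A_k)$ \emph{is} correct, since the $(i,j)$ block of the right-hand side is $(u_{ij}I_n)A_j = u_{ij}A_j$.

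First I would verify this factorisation $T=(U\otimes I_n)\,\mathrm{blockdiag}(A_1,\dots,A_k)$ by comparing blocks, which is immediate. Then $\det T = \det(U\otimes I_n)\cdot\det(\mathrm{blockdiag}(A_1,\dots,A_k))$ by multiplicativity of the determinant. The second factor is $|A_1||A_2|\cdots|A_k|$ by the determinant of a block-diagonal matrix. For the first factor I would invoke the standard identity $\det(U\otimes I_n) = (\det U)^n$ (the Kronecker-product determinant formula in the special case where one factor is $I_n$); alternatively one can see it directly because $U\otimes I_n$ is, up to a permutation similarity, the block-diagonal matrix with $n$ copies of $U$, and a permutation similarity does not change the determinant. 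Multiplying gives $\det T = |A_1||A_2|\cdots|A_k|\,|U|^n$, as claimed.

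I do not anticipate a serious obstacle here; the only point requiring minor care is the Kronecker determinant fact $\det(U\otimes I_n)=|U|^n$, and even this can be sidestepped by the permutation-similarity argument if one prefers to keep the proof self-contained. A secondary bookkeeping point is to make sure the block-column description of $T$ matches the left (as opposed to right) tangle product in Definition~\ref{tangle}: in the left product it is indeed the column index that selects $A_j$, which is exactly what makes the factorisation $(U\otimes I_n)\,\mathrm{blockdiag}(A_1,\dots,A_k)$ work; for the right product one would instead get $\mathrm{blockdiag}(A_1,\dots,A_k)\,(U\otimes I_n)$, giving the same determinant. So the statement and proof are insensitive to that choice, and the whole argument is three short lines once the factorisation is written down.
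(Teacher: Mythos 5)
Your proof is correct, and it takes a genuinely different route from the paper's. You prove the clean factorisation $T=(U\otimes I_n)\,(A_1\oplus A_2\oplus\cdots\oplus A_k)$, whose $(i,j)$ block is indeed $u_{ij}A_j$, and then finish by multiplicativity of the determinant together with $\det(U\otimes I_n)=(\det U)^n$ (itself justified either by the Kronecker determinant formula or by permutation similarity to $n$ copies of $U$ on the block diagonal). The paper instead argues by induction on the number of blocks using block row operations: after disposing of the degenerate cases ($A_1=0$ or the whole first row of $U$ zero), it moves a nonzero pivot $\al_{11}$ into position, clears the first block column, writes $|T|=\det(\al_{11}A_1)\cdot|B|$ for a one-block-smaller matrix $B$ of the same shape, and iterates. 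Your factorisation buys several things: no case analysis or pivoting, no division by the pivot (so the argument works verbatim over any commutative ring, whereas the elimination step implicitly divides by $\al_{11}$), a simultaneous proof of the right-tangle version since $(A_1,\dots,A_k;U)=(A_1\oplus\cdots\oplus A_k)(U\otimes I_n)$, and a transparent explanation of why the formula specialises to the tensor-product determinant when all $A_i$ coincide. What the paper's elimination approach offers in exchange is that it stays within the block-determinant manipulations it cites (Powell) and is the kind of computation one can adapt when no global factorisation is available; but as a proof of this particular proposition your argument is shorter and, if anything, more robust.
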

\begin{proof} This can be shown using results on determinants of block
  matrices as for example in \cite{powell}. Alternatively a direct
  proof may be given by applying the techniques used when working with
  proofs of  determinants  on block matrices. 
  Proceed inductively as follows. Let $U= (\al_{ij})$. 
  If $A_1=0$ or if  all $\al_{1i}=0$ the result is clear. We can
  assume   we  can assume $\al_{1i}\neq 0$ for some
  $\al_i$ and hence by block operations we can assume $\al_{11} \neq 0$.  
  Then apply block operations on $T$
  to reduce the first column of blocks to the form $\begin{pmatrix}
    \al_{11} A_1 \\ 0 \\ \vdots \\ 0 \end{pmatrix}$;  these block
  operations do not alter the value of the determinant. Then $|T| =
  \det(\al_{11}A_1) \ti |B|$ where $B$ is a similar matrix to $T$ but
  of one block size smaller; induction may then be applied.   
\end{proof}

A similar result holds for the right matrix  tangle product. 

This property is particularly useful in applications, see for example
Section \ref{spacetime}.  Proposition \ref{tom}  generalises the determinant value of a matrix  tensor product --  if all the $A_i$ are the same, $A_i=A$,  then $|T|= |A|^k|U|^n$ and  $T= U\otimes A$.
For example let $\{A,B\}$ be $n\ti n$ matrices and let $U$ be of size $2\ti 2$.
Then $T=(U;A,B) $ has $ |T|= |A||B||U|^n$. 
The determinantal property of tensor
products which is a special case of Proposition \ref{tom} is in itself very
useful in many areas. 


Finding the eigenvalues of a matrix  tangle product is difficult and
no formula in terms of the eigenvalues of the constituents exists.  The eigenvalues of a matrix tangle product are `entangled'.  

 \subsection{Preserved properties}

Which properties of the shuffler and tangles of a matrix tangle product are preserved? 
Let $\mathcal{P}$ be a property of a matrix,  such as for example being unitary or invertible. 
Say $M\in \mathcal{P}$ if and only if $M$ has this property $\mathcal{P}$.  
If for any $G=(U;A_1, A_2, \ldots,A_k)$ with $A_i\in \mathcal{P}$ for $i=1,2,\ldots,k$ and $U\in \mathcal{P}$,  implies that $G\in \mathcal{P}$ then say the matrix tangle product {\em preserves} $\mathcal{P}$.   
\begin{itemize} \item The property of being a unitary matrix is
  preserved.  \item The property of being an invertible matrix  is
  preserved. \item The property of being a paraunitary matrix is preserved.
\item The property of being a normal matrix is not preserved. \item
  The property of being a symmetric matrix  is not preserved. \item
  The property of being a Hadamard matrix  is preserved. \end{itemize}


The preserved properties are stated as Propositions in the following
subsections \ref{unitary}, \ref{para} and \ref{hada}. These
subsections derive applications, constructions and designs.

\subsection{Unitary}\label{unitary} 
\begin{proposition}\label{unit} Let $\{A_1, A_2, \ldots, A_k\}$ be
  $m\ti m$ unitary matrices and  let $U=(u_{ij})$ be a unitary $k\ti
  k$ matrix. Then \\ 
$ \begin{pmatrix} A_1u_{11} & A_2u_{12} & \ldots & A_ku_{1k} \\ A_1u_{21} &
    A_2u_{22} & \ldots & A_ku_{2k} \\ \vdots & \vdots & \vdots & \vdots \\ A_1u_{k1} & A_2u_{k2} & \ldots & A_ku_{kk} \end{pmatrix} $
and $ \begin{pmatrix} A_1u_{11} & A_1u_{12} & \ldots & A_1u_{1k} \\ A_2u_{21} &
    A_2u_{22} & \ldots & A_2u_{2k} \\ \vdots & \vdots & \vdots & \vdots \\ A_ku_{k1} & A_ku_{k2} & \ldots & A_ku_{kk} \end{pmatrix} $
are  unitary $mk\ti mk $ matrices.
\end{proposition}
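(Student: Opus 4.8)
The plan is to verify the defining relation directly by a block computation, in the spirit of the proof of Proposition~\ref{idem}. Write $G=(U;A_1,A_2,\ldots,A_k)$ for the left matrix tangle product, so that the $(p,q)$ block of $G$ is the $m\ti m$ matrix $A_q u_{pq}$. Since each $u_{pq}$ is a scalar, the $(p,r)$ block of $GG^*$ is $\sum_{q=1}^{k} (A_q u_{pq})(A_q u_{rq})^* = \sum_{q=1}^{k} u_{pq}\ov{u_{rq}}\,A_q A_q^{*}$.

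First I would use that each $A_q$ is unitary, so $A_q A_q^{*}=I_m$; this collapses the block sum to the scalar $\bigl(\sum_{q=1}^{k} u_{pq}\ov{u_{rq}}\bigr)I_m$. Then, since $U$ is unitary, $UU^{*}=I_k$ says precisely that the rows of $U$ are orthonormal, i.e.\ $\sum_{q=1}^{k} u_{pq}\ov{u_{rq}}=\delta_{pr}$. Hence the $(p,r)$ block of $GG^{*}$ equals $\delta_{pr}I_m$, so $GG^{*}=I_{mk}$ and $G$ is a unitary $mk\ti mk$ matrix.

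For the right matrix tangle product $H=(A_1,A_2,\ldots,A_k;U)$ the $(p,q)$ block is $A_p u_{pq}$, and the $(p,r)$ block of $HH^{*}$ is $\sum_{q=1}^{k}(A_p u_{pq})(A_r u_{rq})^{*}=A_p\bigl(\sum_{q=1}^{k} u_{pq}\ov{u_{rq}}\bigr)A_r^{*}$. Here the order of the two collapses is reversed: one pulls the scalar sum out first (the factors $A_p$ and $A_r^{*}$ do not combine when $p\neq r$), getting $\delta_{pr}A_pA_r^{*}$ by unitarity of $U$, and only then uses $A_pA_p^{*}=I_m$ to conclude this is $\delta_{pr}I_m$. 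Thus $HH^{*}=I_{mk}$. Alternatively the right case follows from the left one via the identity $(A_1,\ldots,A_k;U)=(U\T;A_1\T,\ldots,A_k\T)\T$ noted after Definition~\ref{tangle}, together with the fact that a square matrix is unitary if and only if its transpose is.

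I expect no genuine obstacle here; the only point needing care is the bookkeeping --- keeping track of which block index is summed over, which index of $U$ the scalar $u_{pq}$ carries in the left versus the right product, and performing the two collapses ($A_qA_q^{*}=I_m$ and orthonormality of the rows of $U$) in the correct order, which, as noted, differs between the two cases. Since both tangle products are $mk\ti mk$, checking $GG^{*}=I$ and $HH^{*}=I$ is enough to conclude unitarity.
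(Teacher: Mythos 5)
Your proof is correct, and it is essentially the paper's approach: the paper states Proposition~\ref{unit} without proof, but your block computation (taking block inner products of rows and collapsing via $A_qA_q^{*}=I_m$ together with orthonormality of the rows of $U$) is exactly the argument used for the analogous Proposition~\ref{idem}, adapted to both the left and right tangle products. Nothing further is needed.
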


Thus the matrix tangle products of unitary matrices are  unitary
matrices. Section \ref{gyt} also constructs unitary matrices  from
COSI (complete orthogonal symmetric idempotent)  sets. This greatly
expands the pools of unitary matrices available for various
purposes. Entangled matrices are often required and this condition can
be realised by these constructions.

A matrix is unitary if and only if its rows or columns form an
orthonormal basis and thus new orthonormal bases are constructed when
a new unitary matrix is constructed. 


\begin{example}{Pauli unitary matrices as builders for higher order matrices}

  Applying the process to the Pauli matrices $\sigma_x =\begin{ssmatrix}0& 1 \\ 1 & 0\end{ssmatrix}, \sigma_y=\begin{ssmatrix} 0 & i \\ -i &0 \end{ssmatrix}, \sigma_z=\begin{smatrix} 1 & 0 \\ 0 & -1 \end{smatrix}$ gives interesting entangled unitary matrices. The following six $4\ti 4$ matrices are obtained when each of the matrices is used (once) as a tangle or as a shuffler:

$(\s_z;\s_x,\s_y), (\s_z;\s_y,\s_x), (\s_y;\s_x,\s_z), (\s_y;\s_z,\s_x), (\s_x;\s_z,\s_y), (\s_x;\s_y,\s_z)$

Other $4\ti 4$ unitary matrices may be formed from $\{\s_x,\s_y,\s_y\}$;  some are tensor products such as $(\s_x,\s_x;\s_z)$ and ones are like $(\s_x,\s_y;\s_x)$ where a matrix appears as both a tangle and the shuffler.
Taking two of these $4\ti 4$ unitary matrices as tangles and using one of $\{\s_x,\s_y,\s_z\}$ as a shuffler produces an $8\ti 8$ unitary matrix in which the Pauli matrices are constituents and entangled.

This process may be continued to produce $2^n\ti 2^n$ unitary entangled matrices from the Pauli matrices. The significance of these needs to be explored. 
\end{example} \begin{example}{Real unitary}
  
Start with the following real $2\ti 2$ matrices 
$\begin{ssmatrix} -1 & 1 \\ 1 & 1 \end{ssmatrix}, \begin{ssmatrix} 1 & -1\\ 1 & 1 \end{ssmatrix},\begin{ssmatrix} 1 & 1 \\ -1 & 1 \end{ssmatrix}, \begin{ssmatrix} 1 & 1 \\ 1 & -1\end{ssmatrix}$ from which to build new matrices. Make these  unitary by dividing by $\sqrt{2}$ and then unitary matrices are  built by the construction methods. 

  The following real unitary (orthogonal) matrices
  $\begin{ssmatrix} \cos \theta & \sin \theta \\ -\sin \theta & \cos \theta \end{ssmatrix}$ are often used in practice. 
  Different $\theta$ may be used from which real $2^n\ti 2^n$ real orthogonal matrices are  built.

  Now $A_i=e^{i\theta_i}$ are $1\ti 1$ unitary  matrices.
Let $U$ be a $k\ti k$ unitary  matrix. \\ Then $(U;A_1,A_2,\ldots, A_k),(A_1,A_2,\ldots, A_k;U)$ are also unitary $k\ti k$ matrices. 
\end{example}
  
  \begin{example}{Unbiased bases example} \label{unbiased}
    
\begin{itemize} \item   Let $U = \frac{1}{\sqrt{2}}\begin{ssmatrix} 1 & 1 \\ 1 & -1 \end{ssmatrix}$ and let $ A= (1), B= (i)$. Then $\{U,A,B\}$ are unitary matrices. Now $(A,B;U)$ is a unitary matrix $G=\frac{1}{\sqrt{2}}\begin{smatrix} 1 & 1 \\ i & -i \end{smatrix}$. Then $\{U,G,I_2\}$ constitute three matrices consisting of mutual unbiased bases for $\C^2$.
\item   Let $U=\frac{1}{\sqrt{3}}\begin{ssmatrix} 1 & 1 &1 \\ 1 & \om & \om^2 \\ 1 & \om^2 & \om \end{ssmatrix}$. Let $A=(1), B= (\om), C= (\om)$ and form
  $U_1 = (A,B,C;U)$. Let $A= (1), B= (\om^2), C= (\om^2)$ and form
  $U_2 = (A,B,C;U)$.  Then $\{U,U_1,U_2,I_3\}$ are 4  matrices consisting of mutually unbiased bases for $\C^3$.  
\end{itemize}
\end{example} 

  \subsection{Additional Paraunitary concepts}\label{para} 
  Paraunitary matrices are fundamental  in signal
  processing and the concept of a paraunitary matrix
  plays an important  role in the research area of multirate 
filterbanks and  wavelets.  
In the polyphase domain, the synthesis matrix of an orthogonal filter
bank is a paraunitary matrix;  
a Filter Bank is orthogonal if its polyphase matrix is paraunitary, see \cite{strang}. Thus designing an orthogonal filter bank is {\em equivalent} to designing a paraunitary matrix. The book \cite{strang}, Chapters 4-6,  makes the design of paraunitary matrices a primary aim. 
Designing  {\em entangled} paraunitary matrices is often a
requirement and has been a difficult task.

 The literature is huge and expanding rapidly; of particular note is   \cite{zhou}, where further background and many references may be found.  
From the literature: ``Designing nonseparable multidimensional orthogonal filter banks is a challenging task.''; ``Multirate filter banks give the structure required to generate important cases of wavelets and the wavelet transform.''; `` In filter bank literature  the terms  orthogonality, paraunitary and lossless are often used interchangeably.''
``Paraunitryness is a necessary and sufficient condition for wavelet orthogonality.''
 ``Designing an orthogonal filter bank is equivalent to designing a paraunitary matrix.''

Paraunitary matrices are constructed using COSI sets by methods of  Propositions \ref{idem1} and \ref{newt}, see Section \ref{gyt}; paraunitary matrices which are symmetric may be built with this method. 

`Being a paraunitary matrix' is a property preserved by matrix tangle products.
\begin{proposition}\label{het}
Let $\{A_1, A_2, \ldots, A_k\}$ be $m\ti m$ paraunitary matrices and  let $U=(u_{ij})$ be a paraunitary $k\ti k$ matrix. Then

  $ \begin{pmatrix} A_1u_{11} & A_2u_{12} & \ldots & A_ku_{1k} \\ A_1u_{21} &
    A_2u_{22} & \ldots & A_ku_{2k} \\ \vdots & \vdots & \vdots & \vdots \\ A_1u_{k1} & A_2u_{k2} & \ldots & A_ku_{kk} \end{pmatrix} $ and 
$ \begin{pmatrix} A_1u_{11} & A_1u_{12} & \ldots & A_1u_{1k} \\ A_2u_{21} &
    A_2u_{22} & \ldots & A_k2u_{2k} \\ \vdots & \vdots & \vdots & \vdots \\ A_ku_{k1} & A_ku_{k2} & \ldots & A_ku_{kk} \end{pmatrix} $  

are paraunitary  $mk\ti mk $ matrix in the union of the variables in $\{A_1,A_2,\ldots, A_k, U\}$.
  
\end{proposition}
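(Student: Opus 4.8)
The plan is to recognise paraunitarity as unitarity over a $*$-ring and then replay, verbatim, the block inner-product argument used for Propositions \ref{idem} and \ref{unit}. First I would fix $\mathbf{w}$ to be the union of all variables occurring in $\{A_1,\ldots,A_k,U\}$ and work in the Laurent polynomial ring $R=\C[w_1^{\pm 1},\ldots]$ carrying the involution $p\mapsto\tilde p$ that conjugates complex coefficients and inverts each variable; for a matrix $M(\mathbf{w})$ over $R$ this gives $\tilde M = M^*(\mathbf{w}^{-1})$, so that ``$M$ paraunitary'' means exactly $M\tilde M=I$. I would record the few formal identities needed — $\widetilde{MN}=\tilde N\tilde M$, $\widetilde{M+N}=\tilde M+\tilde N$, $\tilde{\tilde M}=M$, and centrality of scalars $pI$ — and note that because $R$ is an integral domain a square $M$ with $M\tilde M=I$ automatically satisfies $\tilde M M=I$ (pass to the fraction field), so each $A_i$ and $U$ is two-sided paraunitary.

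Next I would carry out the direct computation. Writing $G=(U;A_1,\ldots,A_k)$, whose $(i,j)$ block is $A_j u_{ij}$, the $(i,l)$ block of $G\tilde G$ is
\[
\sum_{j=1}^k G_{ij}\,\widetilde{G_{lj}}=\sum_{j=1}^k (A_j u_{ij})\,\widetilde{(A_j u_{lj})}=\sum_{j=1}^k u_{ij}\tilde u_{lj}\,A_j\tilde A_j=\Big(\sum_{j=1}^k u_{ij}\tilde u_{lj}\Big)I=\delta_{il}I,
\]
the scalars $u_{ij}\tilde u_{lj}$ being pulled past $A_j\tilde A_j=I$ by centrality, and the surviving sum being the $(i,l)$ entry of $U\tilde U=I$. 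Hence $G\tilde G=I$. For the right matrix tangle product $(A_1,\ldots,A_k;U)$, whose $(i,j)$ block is $A_i u_{ij}$, the mirror computation gives $(G\tilde G)_{il}=A_i\big(\sum_j u_{ij}\tilde u_{lj}\big)\tilde A_l=\delta_{il}\,A_i\tilde A_i=\delta_{il}I$; alternatively one could quote that $(A_1,\ldots,A_k;U)$ is the transpose analogue of a left tangle product, as remarked after Definition \ref{tangle}.

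The only point I expect to need genuine care — as opposed to routine manipulation — is the bookkeeping over the common variable set: I must check that applying the paraconjugation of $R$ to $A_j$ yields the same matrix as paraconjugating within the subring generated by $A_j$'s own variables, so that the hypothesis $A_j\tilde A_j=I$ survives the enlargement of the variable set. This holds because inverting variables absent from $A_j$ changes nothing, and the identical remark covers the scalar entries $u_{ij}$ of $U$. Everything else is exactly the centrality-of-scalars step already performed over $\C$ in the proof of Proposition \ref{idem}, now performed over $R$; indeed the whole argument can be phrased as the one-line observation that the proof of Proposition \ref{unit} uses only the $*$-ring axioms and therefore applies word for word with $\C$ replaced by the $*$-ring $R$.
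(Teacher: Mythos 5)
Your proof is correct and follows essentially the route the paper intends: the paper states Proposition \ref{het} without a written proof, relying on the same block inner-product computation used for Propositions \ref{idem} and \ref{unit}, which is exactly what you carry out after recasting paraunitarity as unitarity over the Laurent polynomial $*$-ring. Your extra care about paraconjugation being unaffected by variables absent from a given $A_j$ is a sound (if routine) addition rather than a departure.
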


The constructions  in Propositions \ref{idem1}, \ref{newt} and \ref{het} may be combined. 
Building blocks for paraunitary matrices are available; these are not
tensor products and are entangled in general. 
The shuffler itself may be a unitary matrix as may any of the tangles.  Examples are given in \cite{tedbarry} where a more restricted tangle definition is given. Although the systems here give building blocks for multidimensional paraunitary matrices, it is not claimed that every multidimensional paraunitary matrix is built in this way although many such are built in this manner.  
The  renowned building blocks for 1D paraunitary matrices over $\C$ due to
Belevitch and Vaidyanathan as described in
\cite{vaid} are constructed from a complete orthogonal idempotent set of two elements. 

Now $A_i=z_i$ are $1\ti 1$ paraunitary  matrices.  
Let $P$ be a $k\ti k$ paraunitary  matrix. Then $G=(P;A_1,A_2,\ldots, A_k)$ is a paraunitary $k\ti k$ matrix in the union of the variables in $P$ and $\{z_1,z_2,\ldots, z_k\}$.

By replacing the variables by elements of modulus $1$ in a paraunitary matrix, a unitary matrix is obtained. Constructing paraunitary matrices leads to the construction of unitary matrices.

\subsection{Hadamard $\leftrightarrow$ Unitary}\label{hada}   
$H$ is  a real Hadamard $n\ti n$ matrix if its entries are elements of
modulus $1$ and $HH^*=nI_n$. A Hadamard matrix of type $H(n,p)$ is a 
matrix in which each element of $H(n,p)$ is a $p^{th}$ root of $1$ and
$H(n,p)H(n,p)^*=nI_n$. A $H(n,2) $  matrix is a real Hadamard matrix
$n \times n$ matrix. It is known that the Di\c{t}\u{a}
construction preserves Hadamard matrices, \cite{dita,hos,craigen}.

\begin{proposition}\label{unit1}\cite{dita} Let $\{A_1, A_2, \ldots,
  A_k\}$ be $m\ti m$ Hadamard  matrices and let $U=(u_{ij})$ be a
  Hadamard  $k\ti k$ matrix. Then
  
(i) \,  $\left(\begin{smallmatrix} A_1u_{11} & A_2u_{12} & \ldots & A_ku_{1k} \\ A_1u_{21} &
    A_2u_{22} & \ldots & A_ku_{2k} \\ \vdots & \vdots & \vdots & \vdots \\ A_1u_{k1} & A_2u_{k2} & \ldots & A_ku_{kk} \end{smallmatrix}\right) $
is a Hadamard $km\ti km $ matrix. If the $A_i$ and $U$ have entries which are $n^{th}$ roots of $1$ 
then this matrix has entries which are $n^{th}$ roots of $1$.
  
  (ii) \, $\left( \begin{smallmatrix} A_1u_{11} & A_1u_{12} & \ldots & A_1u_{1k} \\ A_2u_{21} &
    A_2u_{22} & \ldots & A_2u_{2k} \\ \vdots & \vdots & \vdots & \vdots \\ A_ku_{k1} & A_ku_{k2} & \ldots & A_ku_{kk} \end{smallmatrix}\right) $
is a Hadamard $km\ti km $ matrix. If the $A_i$ and $U$ have entries which are $n^{th}$ roots of $1$ 
then this matrix  has entries which are $n^{th}$ roots of $1$.
\end{proposition}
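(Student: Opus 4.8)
The plan is to verify, directly and in parallel for the two tangle products, the two defining properties of a Hadamard matrix: that every entry has modulus $1$ (and is an $n$th root of unity when the constituents are), and that the product with its conjugate transpose is the appropriately scaled identity. Write $T=(U;A_1,\dots,A_k)$ for the left matrix tangle product, so that its $(i,j)$ block is $A_ju_{ij}$. First I would dispose of the entry condition: an arbitrary entry of the $(i,j)$ block is $(A_j)_{st}\,u_{ij}$, a product of two numbers of modulus $1$, hence of modulus $1$; and if all entries of the $A_i$ and of $U$ are $n$th roots of unity, then so is this product, since the $n$th roots of unity are closed under multiplication. This settles the ``entries'' clause of both (i) and (ii) simultaneously.

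The substantive step is a blockwise computation of $TT^*$. The $(i,i')$ block of $TT^*$ is $\sum_{j=1}^k (A_ju_{ij})(A_ju_{i'j})^* = \sum_{j=1}^k u_{ij}\overline{u_{i'j}}\,A_jA_j^*$. Here I would invoke the hypothesis that each $A_j$ is an $m\ti m$ Hadamard matrix, so $A_jA_j^*=mI_m$; this pulls everything into a scalar, and the $(i,i')$ block becomes $\bigl(m\sum_{j=1}^k u_{ij}\overline{u_{i'j}}\bigr)I_m$. The bracketed sum is exactly the $(i,i')$ entry of $UU^*$, which equals $k\,\delta_{ii'}$ because $U$ is a $k\ti k$ Hadamard matrix. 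Hence the $(i,i')$ block of $TT^*$ is $mk\,\delta_{ii'}I_m$, i.e.\ $TT^*=mk\,I_{mk}$, so $T$ is a Hadamard $km\ti km$ matrix. That proves (i).

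For (ii), with $T'=(A_1,\dots,A_k;U)$ the $(i,j)$ block is $A_iu_{ij}$, so the tangle matrix is constant along each block row. The analogous computation gives the $(i,i')$ block of $T'T'^*$ as $\bigl(\sum_{j=1}^k u_{ij}\overline{u_{i'j}}\bigr)A_iA_{i'}^*$; the scalar factor is again $k\,\delta_{ii'}$, so all off-diagonal blocks vanish and the diagonal blocks are $k\,A_iA_i^* = km\,I_m$, giving $T'T'^*=mk\,I_{mk}$. Alternatively (ii) follows from (i) via the identity $(A_1,\dots,A_k;U)=(U\T;A_1\T,\dots,A_k\T)\T$ recorded above for square matrices, since the transpose of a Hadamard matrix is Hadamard (from $H^*H=nI$ one gets, by transposing, $H\T\overline H = nI$, i.e.\ $H\T(H\T)^*=nI$) and transposition preserves the $n$th-root-of-unity property.

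There is essentially no obstacle: once one observes that the Hadamard property $A_jA_j^*=mI_m$ collapses each block sum to a scalar multiple of $I_m$, the row-orthogonality $UU^*=kI_k$ finishes the job in a single line. The only points needing a little care are the bookkeeping of which index labels the tangle inside a given block (the column index $j$ for the left product, the row index $i$ for the right one) and keeping the two scaling constants $m$ and $k$ straight; in particular the non-commutativity of the $A_i$ never intervenes, because in the right-product case the off-diagonal blocks are annihilated by the orthogonality of $U$ before $A_iA_{i'}^*$ could matter.
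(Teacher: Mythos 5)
Your proof is correct, and the blockwise verification you give is exactly the standard argument: the Hadamard condition $A_jA_j^*=mI_m$ collapses each block of $TT^*$ to a scalar multiple of $I_m$, and the row-orthogonality of $U$ then gives $TT^*=mkI_{mk}$; the entry and $n^{th}$-root clauses follow from closure of roots of unity under multiplication. The paper itself supplies no proof for this proposition -- it is cited to Di\c{t}\u{a} as a known result -- so your argument fills in the omitted verification, and it is the same style of block inner product computation the paper uses for its Proposition~\ref{idem}; your alternative derivation of (ii) from (i) via $(A_1,\dots,A_k;U)=(U\T;A_1\T,\dots,A_k\T)\T$ is also sound.
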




The Di\c{t}\u{a} product has been used in a number of papers to
construct Hadamard matrices from lower order Hadamard matrices, see
for example \cite{dita} itself,  and also \cite{hos} and \cite{craigen}. Hadamard matrices have been also  constructed in section \ref{gyt} by the COSI method. 

Now $A_i=e^{i\theta_i}$ are $1\ti 1$ Hadamard matrices. Say $H$ is a $H(n,p)$ matrix if it has size $n$ and entries which are $p^{th}$ roots of $1$. 
Let $H$ be a $k\ti k$ Hadamard matrix. Then $G=(H;A_1,A_2,\ldots, A_k)$ is a Hadamard matrix. If $H=H(k,p)$ and $\{A_i = A_{i}(1,p)\}$ then $G$ is a $G(k,p)$ matrix. If $H=H(k,p)$ and $A_i=A_i(1,n_i)$ then $G$ is a $G(k,s)$ matrix where
$s =\lcm(p,n_1, n_2, \ldots, n_k)$.  

Symmetric Hadamard matrices are {\em Type II} matrices; the definition and further information on Type II matrices  may be found in  \cite{hos}  and  the many references therein. ``Type II matrices were introduced explicitly in the study of {\em spin} models.'' The following construction is similar to that formulated in for example
\cite{craigen} but is a useful way with which to look at the formulation of symmetric Hadamard matrices. 
\begin{construction}{Construct symmetric Hadamard matrices.}\label{hadsymm}
  
{\em Let $H$ be a Hadamard matrix of type $H(n,p)$. 
Let $G$ be the corresponding unitary matrix, that is $G=\frac{1}{\sqrt{n}} H$. The columns $\{u_1,u_2, \ldots, u_n\}$ of $G$ form an orthonormal basis for $\C_n$. Let $E_i=u_iu_i^*$. Then $\{E_1, E_2, \ldots, E_n\}$ is a COSI set, from which unitary $n^2 \ti n^2$ matrices may be formed as in section \ref{gyt}. In particular symmetric $n^2 \ti n^2$ matrices may be formed using the reverse circulant construction. These matrices have entries which are $\frac{1}{n}$ times a $p^{th}$ root of $1$ and so multiplying any of these matrices  by $n$ gives a symmetric $n^2\ti n^2$ Hadamard matrix which is a $H(n^2,p)$ matrix.}
\end{construction}
Starting from {\em any} Hadamard $H(n,p)$, Construction \ref{hadsymm} designs series of
Hadamard $H(n^2,p)$ matrices. These can be designed  to be symmetric
by using reverse circulant form. 
The process may then be continued to produce $H(n^{2^k},p)$, for
$k\geq 1$ Hadamard matrices going via unitary matrices.  By taking the reverse
circulant process at any stage of production the matrices produced are
symmetric. Only at the final stage need the reverse circulant process
be applied in order to design symmetric Hadamard matrices. 

It is also known, see for example \cite{craigen},  that a symmetric
$2n\ti 2n $ Hadamard symmetric matrices may be constructed from  $n\ti
n$ symmetric Hadamard matrices. The construction \ref{tomm}
below is similar but different and illustrates the niceness  of the
tangled product in general for designs.

(Recall:A Hadamard matrix $H$ is said to be of type $H(n,p)$ if
it is an $n\ti n$ Hadamard matrix and all its entries are $p^{th}$ roots of unity.) 

\begin{construction}\label{tomm}

  (i) Let $H$ be an $n\ti n$ Hadamard symmetric matrix and $U$ a $2\ti 2$ symmetric matrix. Then $(U;A,A\T), (U;A\T,A), (A,A\T;U),(A\T,A:U)$
  are symmetric  Hadamard $2n\ti 2n$ matrices.

  (ii) Let $H$ be an $n\ti n$ Hadamard symmetric  matrix of type
  $H(n,p)$ and $U$ a $2\ti 2$ symmetric  matrix. Then $(U;A,A\T), (U;A\T,A), (A,A\T;U),(A\T,A:U)$
  are symmetric  Hadamard $2n\ti 2n$ matrices of type $G(2n,p)$. More generally
  if $H$ is of type $H(n,p)$ and $U$ is of type $U(2,q)$ then $(U;A,A\T), (U;A\T,A), (A,A\T;U),(A\T,A:U)$ are of type $G(2n,s)$ where $s = \lcm(q,p)$.
  
\end{construction}



 The $n\ti n$ Fourier matrix is  a Hadamard $H(n, n)$ matrix.  

\begin{example} Let  $H= \begin{pmatrix} 1 & 1 & 1 \\ 1 & \om & \om^2 \\ 1 & \om^2 & \om \end{pmatrix}$ where $\om$ is a primitive third root of $1$. Then $G= \frac{1}{\sqrt{3}} H$ is a unitary matrix.  The columns of $G$  are $u_1 = \frac{1}{\sqrt{3}}(1,1,1)\T, u_2=\frac{1}{\sqrt{3}}(1,\om,\om^2)\T,
u_3 =  \frac{1}{\sqrt{3}}(1,\om^2,\om)\T$.
Then $\{E_1 = u_1u_1^* = \frac{1}{3}\begin{pmatrix} 1&1 & 1 \\ 1 & 1 & 1 \\ 1 &1 & 1\end{pmatrix}, E_2 = u_2u_2^* = \frac{1}{3}\begin{pmatrix} 1&\om^2 & \om \\ \om & 1 & \om^2 \\ \om^2 &\om & 1\end{pmatrix}, E_3 = u_3u_3^*=\frac{1}{3}\begin{pmatrix} 1&\om & \om^2 \\ \om^2 & 1 & \om \\ \om &\om^2 & 1\end{pmatrix}\}$ is a COSI set.
Thus $K= \begin{pmatrix} E_1 & E_2 & E_3 \\ E_2 & E_3 & E_1 \\ E_3&E_1& E_2 \end{pmatrix}$ is a symmetric unitary matrix and
$L=3K$ is a symmetric Hadamard $L(9,3)$ matrix.

\end{example}
     


   




\begin{example}:
$P=  \begin{ssmatrix} 1& 1\\ 1& -1 \end{ssmatrix},
Q=  \begin{ssmatrix} 1 &  i \\ i & 1 \end{ssmatrix}$. 
are Hadamard $H(2,4)$ matrices. Then $A=\frac{1}{\sqrt{2}}P, B= \frac{1}{\sqrt{2}}B$ are unitary matrices. Infinite series of unitary and Hadamard matrices may be built as follows.  Build $\{A,B\}$ relative to unitary $A$ and then build $\{A,B\}$ relative to unitary $B$ to obtain
Build 
$A_1 = (A,B;A)= \frac{1}{2} \begin{ssmatrix} 1&1& 1&i \\ 1& -1 & i& 1 \\ 1 & 1 & -1 & -i \\ 1& -1 & -i & 1\end{ssmatrix}, B_1= (A,B;B)=  \frac{1}{2}\begin{ssmatrix}  1 & 1 & i & -1 \\ 1& -1 & -1 & i \\ i & i & 1& i \\ i & -i & i & 1 \end{ssmatrix}$. Other options for $A_1,B_1$ are $A_1=(A;A,B), B_1= (B;A,B)$ but also others such as swapping $A,B$ around. These are $4\ti 4$ matrices and $2 A_1, 2 B_1$ are Hadamard $H(4,4)$ matrices.  

Build $(A_1,B_1;A), (A_1,B_1;B)$ to get unitary $8\ti 8$ matrices with entries $\pm 1, \pm i$ and from these get $H(8,4)$ matrices. Build $(A_1, B_1;A_1), (A_1,B_1;B_1) $ to get $H(16,4)$ matrices. The process may be continued in many different directions.  

\end{example}










\subsubsection{Skew Hadamard}
A Hadamard $n\ti n$ matrix is a {\em skew Hadamard} provided $H = I_n+U$
with $U^*=-U$. If interested in real Hadamard matrices then it is required
that $U^*=U\T = -U$.
This implies $H+H^* = 2I_n$. The Di\c{t}\u{a} product may be used to produce skew $2n\ti
2n$ Hadamard matrices from a skew $n\ti n$ Hadamard matrix.
Skew Hadamard matrices are used in a number of areas including for the
construction of orthogonal designs. 

\begin{construction}\label{ditta} Let $A$ be an $n\ti n$ skew Hadamard matrix and let $U$ be a $2\ti 2$ skew symmetric Hadamard matrix. Then $(U;A,A\T), (U;A\T,A), (A,A\T;U), (A\T,A;U)$  are skew symmetric $2n\ti 2n $ matrices.
\end{construction}
The known   method, see for example \cite{craigen},  for producing a
$2n\ti 2n $ skew symmetric matrix from a $n\ti n$ skew symmetric
matrix is a special case of Construction \ref{ditta} above. 

The Construction \ref{ditta} works for general Hadamard skew matrices
over $\C$.
 
The $2\ti 2$ skew Hadamard real matrix used initially  could  be $\begin{smatrix} 1 & -1 \\ 1 & 1 \end{smatrix}$ or  $\begin{smatrix}1 & 1 \\ -1 & 1 \end{smatrix}$ or similar. A skew Hadamard $2\ti 2$ matrix over $\C$ in addition are ones of the form $\begin{smatrix} 1 & \al \\ -\al^* & 1 \end{smatrix}$ where $|\al| = 1$.  

Suppose now $H=\left(\begin{smallmatrix} 1 & e^{i\al_1} & - e^{-i\al_2 }&
-e^{-i\al_3} \\ -e^{-i\al_1} & 1 & -e^{-i\al_4} & e^{i\al_5}
\\ e^{i\al_2}&e^{i\al_4}&1 & e^{i\al_6} \\ e^{i\al_3} & - e^{-i\al_5}
& -e^{-i\al_6} & 1 \end{smallmatrix}\right)$ is to be a Hadamard matrix; it
already has the skew condition, $H = I+U$ with $U^* = -U$. Then looking at $HH^*=
4I_n$ the following  (just three) conditions are obtained: (i) $-\al_2 + \al_4 =
-\al_5 -\al_3$; (ii) $\al_1-\al_4 = -\al_3 - \al_6$; (iii)
$\al_1+\al_5 = \al_6 -\al_2$. Solving this system of equations gives
$\al_4=\al_1 + \al_2, \al_5= -\al_1-\al_3, \al_6=\al_2-\al_3$ and $\al_1,\al_2,\al_3$ can have any value. This gives an infinite number of skew Hadamard (complex)  matrices. 
New infinite sets can be formed using Construction \ref{ditta}.

\begin{example} As an example require now that the $\{e^{i\al_j}\}$ be $n^{th}$ roots of $1$. Say for example $ \al_1= \frac{2\pi}{n}, \al_2 = \frac{4\pi}{n}, \al_3= \frac{6\pi}{n}$ and then $\al_4= \frac{6\pi}{n},\al_5= -\frac{8\pi}{n}, \al_6= -\frac{2\pi}{n}$.
\\ This gives the following skew Hadamard matrix
$\left(\begin{smallmatrix} 1& \om & -\om^{-2} & -\om^{-3} \\ -\om^{-1}&1 & -\om^{-3} & \om^{-4} \\ \om^2 & \om^3 & 1 & \om^{-1} \\ \om^3 & -\om^4 & -\om & 1 \end{smallmatrix}\right)$, where $\om = e^{i\frac{2\pi}{n}}$ is a primitive $n^{th}$ root of $1$.

Further taking $\om$ to be a primitive third root of unity, $\om^3=1$,  gives the skew Hadamard matrix
$\left(\begin{smallmatrix} 1& \om &  -\om & -1 \\ -\om^2 & 1 & -1 & \om^2 \\ \om^2 & 1 & 1 & \om^2 \\ 1& -\om & -\om & 1 \end{smallmatrix}\right) $. The entries are $6^{th}$ roots of unity, so this is a $H(4,6)$ matrix.
\end{example}

\quad


Infinite sequences of skew Hadamard real  matrices may be obtained by
starting out with a skew Hadamard matrix real matrix    $A$ and with 
$U= \begin{smatrix}1 & -1 \\ 1 & 1 \end{smatrix}$ or $U= \begin{smatrix}1 & 1 \\ -1 & 1 \end{smatrix}$.

Then  form $A_1$ which can be one of $(U;A,A\T),(U;A\T,A), (A,A\T;U),(A\T,A;U)$.

Replace $A$ by $A_1$ to form  $(U;A_1,A_1\T),(U;A_1\T,A_1),
(A_1,A_1\T;U),(A_1\T,A_1;U)$ which are skew Hadamard matrices; this
process may be continued. 

Let $A$ be a normalised $n\ti n$ Fourier matrix and $B$ a matrix obtained from $A$ by interchanging rows (or columns). Then both $A,B$ are unitary matrices. Let $C$ be any $2\ti 2$ unitary matrix. Then $(A,B;C)$ and $(B,A;C)$ are unitary $2n\ti 2n$ matrices. Let $A$ be a Hadamard matrix and $B$ any permutation of the rows of columns of $A$. Let $C$ be any $2\ti 2$ Hadamard matrix. Then $(A,B;C)$ and $(B,A;C)$ are Hadamard matrices. If $A$ is of type $H(n,q)$ and $C$ is of type $H(2,q)$ then type of $(A,B;C), (B,A;C)$ have a determined type.

  \begin{example} As an explicit example consider the following: \\ Let $A=\frac{1}{\sqrt{3}}\left(\begin{smallmatrix} 1 & 1 & 1 \\ 1 & \om & \om^2 \\ 1 &\om^2 & \om \end{smallmatrix}\right), B= \frac{1}{\sqrt{3}}\left(\begin{smallmatrix} 1 & 1 & 1 \\ 1 & \om & \om^2 \\ 1 &\om^2 & \om \end{smallmatrix}\right), C=\frac{1}{\sqrt{2}}\left(\begin{smallmatrix} 1 & 1 \\ 1& -1 \end{smallmatrix}\right)$, where $\om$ is a primitive $3^{rd}$ root of unity. 

  Then $(A,B;C), (B,A;C)$ are $6\ti 6 $ unitary matrices with entries which are $\al=\frac{1}{\sqrt{6}}$ times $6^{th}$ roots of unity and so $\al(A,B;C), \al(B,A;C)$ are Hadamard matrices with entries which are $6^{th}$ roots of unity.  

  This can also be played out for the discrete cosine and sine transforms. Let $A,B$ be discrete transforms and $C$ any $2\ti 2$ unitary matrix. 
  Then $\{(A,B;C), (B,A;C)\}$ are multidimensional transforms which are not matrix tensor products.
\end{example}

Hadamard matrices have been designed from matrix tensor products -- if $A,B$ are Hadamard matrices so is $A\otimes B$. Many formulations of Hadamard constructions are equivalent to matrix tensor product constructions. 

Thus tangle product generalises the matrix tensor  product method for constructing Hadamard matrices; the matrix tensor product method includes Sylvester's method. Sylvester's method for producing Walsh matrices starts out with $U= \left(\begin{smallmatrix} 1 & 1 \\ 1 & -1 \end{smallmatrix}\right)$ and goes to $\left(\begin{smallmatrix}A & A \\ A & -A\end{smallmatrix}\right)$ where $A$ has already been constructed; this is $A\otimes U$. A similar series may be obtained by starting out with for example beginning with the same or different initial $U$ and then producing $(A,B; U)$ from previously produced $A,B$.
  Indeed the $U$ could change at any stage. The Walsh-Hadamard transfer has uses in many areas and is formed using a matrix tensor product starting out with $\left(\begin{smallmatrix} 1 & 1 \\ 1 & -1 \end{smallmatrix}\right)$. Many variations on this may be obtained using matrix tangle products; for instance the related  matrices
   $\left(\begin{smallmatrix} -1 & 1 \\ 1 & 1 \end{smallmatrix}\right),  \left(\begin{smallmatrix} 1 & -1 \\ 1 & 1 \end{smallmatrix}\right),  \left(\begin{smallmatrix} 1 & 1 \\ -1 & 1 \end{smallmatrix}\right),  \left(\begin{smallmatrix} 1 & 1 \\ 1 & -1 \end{smallmatrix}\right)$ could be used and entangled. 

  Hadamard matrices can  also be designed from paraunitary matrices which themselves have been designed by orthogonal symmetric  complete sets of idempotents, see section \ref{gyt}. 
 
\subsection{Combine COSI and Di\c{t}\u{a} type}\label{joint}
Subsection \ref{gyt} devises COSI constructions and
subsection\ref{dita} initiates Di\c{t}\u{a} type constructions. The
two may be combined to derive further builders. The COSI construction
can be used  to construct unitary, paraunitary or Hadamard matrices
and these may then be used to construct matrix types using the
Di\c{t}\u{a} construction. On the other hand suppose a unitary matrix
is constructed by either method. Then the columns of the matrix may be
used to construct COSI sets from which further unitary, paraunitary or
other  entangled matrix types  can be constructed by the COSI method of section
\ref{gyt}. 

\begin{example}\label{three}

  {\em Let $U= \frac{1}{\sqrt{2}}\begin{ssmatrix}1 & -1 \\ 1 & 1 \end{ssmatrix}, A= \frac{1}{\sqrt{2}}\begin{ssmatrix} 1 & 1 \\ 1 & -1 \end{ssmatrix}, B= \frac{1}{\sqrt{2}}\begin{ssmatrix} 1 & 1 \\ i & -i \end{ssmatrix}$.
  Then
form   $(U;A,B) = \frac{1}{2}\begin{ssmatrix} 1 & 1 & -1& -1 \\ 1 & -1 & -i & i \\ 1 & 1 & 1 & 1 \\ 1 & -1 & i & -i \end{ssmatrix}$.
Thus $2.(U;A,B)$ is a Hadamard $H(4,4)$ matrix.
  
  Let $F= u_1u_1^*, F_2 = u_2u_u^*, F_3 = u_3u_3^*, F_4 = u_4u_4^*$ where $\{u_1, u_2, u_3,u_4\}$ are the columns of $ (U;A,B)$.

  Then
  $\left(\begin{smallmatrix} F_1\al_1 & F_2\al_2 & F_3\al_3 & F_4\al_4
    \\ F_2\al_5 & F_3\al_6 & F_4\al_7 & F_1\al_8 \\ F_3\al_9 &
    F_4\al_{10} & F_1\al_{11} & F_2\al_{12}  \\ F_4\al_{13}1 &
    F_1\al_{14} & F_2\al_{15} & F_3\al_{16} \end{smallmatrix}\right)$,
  for variables $\al_i$, is a paraunitary  matrix; this  is a unitary matrix when the variables are given values of modulus $1$. Also $F_1\al_1+F_2\al_2 + F_3\al_3 + F_3\al_4$ is a paraunitary matrix when the variables are given values  of modulus $1$.

  The process may be continued and infinite sequences obtained.} 
  \end{example} 
\subsubsection{Infinite sequences}\label{inf}

Let $\mathcal{P}$ be a property which is preserved by a matrix tangle product. 
Infinite series of entangled matrices with property $\mathcal{P}$ may
be obtained from constructions already given.  Here we give some more
general methods. Example \ref{three} above  gives the flavour. The
methods lead easily to strong encryption techniques including public
key systems. Error correcting codes may also be developed and both
encryption and error-correcting may be included in the one
system. 

Construct  infinite sequences of entangled matrices with property
$\mathcal{P}$ using initially two matrices with property
$\mathcal{P}$ as follows. 
Let  $A_1,A_2$ be  $2\ti 2$ matrices with a property $\mathcal{P}$ which
is preserved by matrix  tangle product. Form the $4\ti 4$ (different) entangled matrices $(A_1;A_1,A_2)= A_{11}, (A_2;A_1,A_2)=A_{12}, (A_1;A_2,A_1)=A_{13}, (A_2;A_2,A_1)=A_{14}$ which then have property $\mathcal{P}$. Each of the 12 pairs $\{A_{1i},A_{1j}| i\neq j\}$ may be tangles with shuffler  $A_1$ or $A_2$ giving  $24$ new entangled matrix tangle products of size $8\ti 8$ with property $\mathcal{P}$. Choose 2 different  elements of these 24 and form tangle products with either $A_1$ or $A_2$ to get $16 \ti 16$. 
This can be continued indefinitely. At each stage, matrices with property $\mathcal{P}$ are obtained.

\begin{example} Infinite series with real entries may be
  obtained. Suppose the initial
matrices are  real orthogonal 
as for example $A_1=\frac{1}{\sqrt{2}}\begin{smatrix}1 & 1 \\ 1& -1\end{smatrix}, A_2 = \frac{1}{\sqrt{2}}\begin{smatrix}-1&1 \\ 1&1\end{smatrix}$ or more generally of the form $\begin{smatrix} \cos \theta & \sin \theta \\ -\sin \theta & \cos \theta \end{smatrix}$ for differing $\theta$. 
\end{example}
\begin{construction}
Let  $S=\{A_1, A_2, \ldots, A_k\}$ be a set of  size $t\ti t$ matrices
with property $\mathcal{P}$ and $U$ an  $n\ti n$ matrix with property
$\mathcal{P}$. Construct $(U;A_{i_1}, A_{i_2},\ldots,
A_{i_n})$ or $(A_{i_1}, A_{i_2},\ldots, A_{i_n};U)$ with $i_j\in
\{1,2,\ldots, k\}$. For example $\mathcal{P}$ could be the property of
being unitary and $U$ could be the $n\ti n$ unitary Fourier matrix. To
be non-separable it is necessary that   the $i_j$ not all be
equal. This constructs $nt\ti nt$ matrices with property
$\mathcal{P}$; the $A_i$ and $U$ can vary. Infinite series are obtained by
varying $n$.  Infinite series may also be obtained by applying the
construction again using the
matrices constructed which have property $\mathcal{P}$. 
Many  such different infinite sequences may be constructed. 
\end{construction}

\section{Unitary space time}\label{spacetime}
In section \ref{prod} construction methods were laid out for various
types of matrices  and applications to the design of unitary,
paraunitary and special types of these matrices were given. Here we
give applications to the design of {\em constellations of matrices}. 
The design problem for unitary space time
constellations is set out as follows in \cite{mult} and \cite{orig}: ``Let $M$ be
the number of transmitter antennas and $R$ 
the desired transmission rate. Construct a set $\mathcal{V}$ of $L =
2^{RM}$ unitary $M\times M$ matrices such that for any two distinct elements $A,B$
in $\mathcal{V}$, the quantity $|\det(A-B)|$ is as large as possible. Any
set $\mathcal{V}$ such that $|\det(A-B)|> 0$ for all distinct $A,B \in
\mathcal{V}$ is said to have {\em full diversity}.''

The number of transmitter antennas is the size $M$ of the matrices. 
The set $\mathcal{V}$ is known as a {\em constellation} and  the {\em quality} of the constellation 
is measured by

 $$\zeta_{\mathcal{V}} = \frac{1}{2} \min_{V_l, V_m \in \mathcal{V}, V_l \neq
    V_m} |\det (V_l-V_m)|^{\frac{1}{M}}$$

Methods for constructing constellations while determining their quality using orthogonal symmetric idempotent sets was initiated in \cite{ted2}. These  can now be expanded and further constellations obtained  using the constructions in Section \ref{prod}.  

The survey article \cite{ams} proposes division algebras for  this
area and, although different, some comparisons can be made with the
constructions here.

Let $\{A_1, A_2, \ldots , A_k\}$ be a constellation of $m\ti m$ matrices with quality $\zeta$ and let $U$ be a unitary matrix. Then
\begin{enumerate}
\item $\{ (U;A_{i_1}, A_{i_2}, \ldots , A_{i_k}) | (i_1,i_2, \ldots, i_k) \, \, \text{is a derangement of} (1,2,\ldots, k) \}$ is a constellation of $mk\ti mk$ matrices
  of quality $\zeta$. A derangement is a permutation such that no element appears in its original position. 
\item Let $\{U_i | i= 1,2,\ldots, s \}$ be a constellation of quality $\zeta$ of $k\ti k$ matrices and $\{A_1,A_2,\ldots,A_k\}$ any $k$ unitary $t\ti t $ matrices. Then $\{(U_i;A_1,A_2, \ldots, A_k) | i=1,2,\ldots,s\}$ is a constellation of $kt\ti kt$ matrices with quality also $\zeta$.
  \end{enumerate} 

Unitary matrices and paraunitary matrices are constructed according to
Proposition \ref{idem1} using a COSI  set  $\{E_1, E_2, \ldots, E_k\}$ and forming
\\ $G= \left(\begin{smallmatrix} E_{i_{11}}\al_{11} & E_{i_{12}}\al_{12} & \ldots & E_{i_{1k}}\al_{1k} \\ E_{i_{21}}\al_{21} & E_{i_{22}}\al_{22} & \ldots & E_{i_{2k}}\al_{2k} \\ \vdots & \vdots & \vdots & \vdots \\ E_{i_{k1}}\al_{k1} & E_{i_{k2}}\al_{k2} & \ldots & E_{i_{kk}}\al_{kk} \end{smallmatrix}\right)$  where $\{E_1,E_2, \ldots, E_k\}$ appear once in each row and column. 

Let $G= \left(\begin{smallmatrix} E_1 \al_1 & E_2 \al_2 \\ E_2\al_1 &
  E_1\al_2 \end{smallmatrix}\right)$ where $\{E_1,E_2\}$ is a COSI set
of $2 \ti 2$ matrices  and the $\al_i$ are elements in $\C$. Then $\det G= \al_1^2\al_2^2$. 
Let now $\al_i$ be $n^{th}$ roots of unity and then $ \left\{\left(\begin{smallmatrix} E_1 \al_1 & E_2 \al_2 \\ E_2\al_2 & E_1\al_2 \end{smallmatrix}\right)\right\}$ is a constellation which has full diversity when an $n^{th}$ root of $1$ appears just once in each block column.  Let $A= \left(\begin{smallmatrix} E_1 \al_1 & E_2 \al_2 \\ E_2\al_1 & E_1\al_2 \end{smallmatrix}\right), B= \left(\begin{smallmatrix} E_1 \be_1 & E_2 \be_2 \\ E_2\be_1 & E_1\be_2 \end{smallmatrix}\right) $. Then $|\det (A-B)| = | (\al_1-\be_1)^2(\al_2-\be_2)^2| = | (\al_1-\be_1)|^2|(\al_2-\be_2)|^2$. 

The following is well-known and is easily verified.  
\begin{lemma}\label{cos} Let $z=\cos \theta + i\sin \theta$. Then $|1-z| = 2|\sin \frac{\theta}{2}|$
\end{lemma}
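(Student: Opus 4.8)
The plan is to prove the identity $|1-z| = 2|\sin\frac{\theta}{2}|$ for $z = \cos\theta + i\sin\theta$ by direct computation of $|1-z|^2$ and then applying a standard half-angle identity. First I would write $1 - z = (1-\cos\theta) - i\sin\theta$, so that
\[
|1-z|^2 = (1-\cos\theta)^2 + \sin^2\theta = 1 - 2\cos\theta + \cos^2\theta + \sin^2\theta = 2 - 2\cos\theta = 2(1-\cos\theta).
\]
Next I would invoke the half-angle identity $1 - \cos\theta = 2\sin^2\frac{\theta}{2}$, which gives $|1-z|^2 = 4\sin^2\frac{\theta}{2}$. Taking the (nonnegative) square root of both sides yields $|1-z| = 2\sqrt{\sin^2\frac{\theta}{2}} = 2|\sin\frac{\theta}{2}|$, where the absolute value is needed because $\sin\frac{\theta}{2}$ may be negative for general real $\theta$ while a modulus is always nonnegative.

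There is essentially no obstacle here: the statement is elementary and the paper itself flags it as "well-known and easily verified." The only point requiring a word of care is retaining the absolute value on $\sin\frac{\theta}{2}$ rather than writing $\sin\frac{\theta}{2}$ outright, since the lemma will later be applied with $\theta$ ranging over values arising as differences of arguments of roots of unity, where no sign restriction is available. Aside from that, the proof is just the two-line computation above together with the half-angle formula, so I would present it in that compact form.
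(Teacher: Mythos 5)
Your proof is correct: the computation $|1-z|^2 = (1-\cos\theta)^2+\sin^2\theta = 2(1-\cos\theta) = 4\sin^2\frac{\theta}{2}$ followed by taking the nonnegative square root is exactly the standard verification the paper has in mind, since the paper simply labels the lemma ``well-known and easily verified'' and supplies no further argument. Your remark about keeping the absolute value on $\sin\frac{\theta}{2}$ is also the right point of care for the later application with differences of roots of unity.
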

\begin{corollary}\label{cos1} Let $\al = \om^i, \be=\om^j$ with $i\neq j$ and $\om= e^{\frac{2i\pi}{n}}$ is a primitive $n^{th}$ root of unity. Then $|\al-\be| = 2|\sin \theta|$ where $\theta = \frac{\pi(j-i)}{n}$.
\end{corollary}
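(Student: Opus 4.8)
The plan is to derive Corollary \ref{cos1} as an immediate specialization of Lemma \ref{cos}. First I would observe that since $|\al - \be| = |\be| \cdot |\al/\be - 1|$ and $|\be| = |\om^j| = 1$ (as $\om$ is a root of unity), we have $|\al - \be| = |1 - \al\be^{-1}|$. Writing $\al\be^{-1} = \om^{i-j} = e^{2i\pi(i-j)/n}$, this is a complex number of modulus $1$ on the unit circle, so it has the form $\cos\phi + i\sin\phi$ with $\phi = \frac{2\pi(i-j)}{n}$.

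Next I would apply Lemma \ref{cos} directly with $z = \al\be^{-1}$ and $\theta = \phi = \frac{2\pi(i-j)}{n}$, giving $|1 - \al\be^{-1}| = 2\left|\sin\frac{\phi}{2}\right| = 2\left|\sin\frac{\pi(i-j)}{n}\right|$. Since $|\sin(-x)| = |\sin x|$, this equals $2\left|\sin\frac{\pi(j-i)}{n}\right|$, which is exactly the claimed value with $\theta = \frac{\pi(j-i)}{n}$. The hypothesis $i \neq j$ ensures (together with the usual convention that $i,j$ range over a reduced residue system mod $n$) that this quantity is nonzero, which is what makes the associated constellation have full diversity.

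There is no real obstacle here; the only point requiring a word of care is the reduction from a difference of two roots of unity to the single quantity $|1 - z|$ covered by the lemma, and keeping track of the sign inside the sine (which is harmless because of the absolute value). Everything else is a direct substitution.

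\begin{proof}
Since $\om$ has modulus $1$, so does $\be = \om^j$, and hence
$$|\al - \be| = |\be|\,\bigl|\al\be^{-1} - 1\bigr| = \bigl|1 - \om^{i-j}\bigr|.$$
Now $\om^{i-j} = e^{2i\pi(i-j)/n} = \cos\phi + i\sin\phi$ with $\phi = \frac{2\pi(i-j)}{n}$. Applying Lemma \ref{cos} with $z = \om^{i-j}$ gives
$$|\al - \be| = \bigl|1 - \om^{i-j}\bigr| = 2\Bigl|\sin\tfrac{\phi}{2}\Bigr| = 2\Bigl|\sin\tfrac{\pi(i-j)}{n}\Bigr| = 2\bigl|\sin\theta\bigr|,$$
where $\theta = \frac{\pi(j-i)}{n}$, using $|\sin(-x)| = |\sin x|$. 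Since $i \neq j$, $\theta$ is not an integer multiple of $\pi$, so this value is strictly positive.
\end{proof}
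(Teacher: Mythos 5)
Your proof is correct and is essentially the argument the paper intends: the corollary is stated as an immediate consequence of Lemma~\ref{cos}, obtained exactly as you do by factoring out the unit-modulus $\be$ to reduce $|\al-\be|$ to $|1-\om^{i-j}|$ and then applying the lemma with $\phi=\frac{2\pi(i-j)}{n}$, the sign being absorbed by $|\sin(-x)|=|\sin x|$. (Only your closing positivity remark needs $i\not\equiv j \pmod n$ rather than just $i\neq j$, but that remark is not part of the stated equality.)
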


Now from Corollary \ref{cos1}, $|\det(A-B)| \geq 2^4|\sin \theta |^4$ where $\theta = \frac{\pi}{n}$. Thus the quality of the constellation is $\frac{1}{2}(2^4(|\sin \theta)|^4)^{\frac{1}{4}} 
= |\sin \theta|$.

The number that can be in each constellation when $n^{th}$ roots of unity are used is $n$. 
For $n=4$, $\theta = \frac{\pi}{4}$ and the quality $\approx 0.70710..$; the rate is $\frac{1}{2}$. For $n=8$, $\theta = \frac{\pi}{8}$ and the quality is $\approx 0.38268...$; the rate is $\frac{3}{4}$. For $n=16$, $\theta = \frac{\pi}{16}$ and the quality is $\approx 0.19509...$; the rate is $1$.

\quad


Higher order constellations may also be designed, and quality
determined explicitly, as follows.

Let   $G= \left(\begin{smallmatrix} E_1 \al_1 & E_2 \al_2 &\ldots & E_n \al_n \\ E_n \al_1 & E_{1}\al_{2} &\ldots & E_{n-1}\al_n \\ \vdots & \vdots & \vdots & \vdots &
\\ E_n\al_1 & E_{n-1}\al_2 & \ldots
&E_1\al_n \end{smallmatrix}\right)$ where $\{E_1,E_2,\ldots , E_n\}$
is a COSI set and the $\al_i$ are elements in $\C$. Then it may be
shown that $|\det(G)|= |\al_1\al_2 \ldots \al_n|^n$, where $n$ is the size
of the matrix $E_j$. The set of all $\left\{\left(\begin{smallmatrix} E_1 \al_1 & E_2 \al_2 &\ldots & E_n \al_n \\ E_n \al_1 & E_{1}\al_{2} &\ldots & E_{n-1}\al_n \\ \vdots & \vdots & \vdots & \vdots &
\\ E_n\al_1 & E_{n-1}\al_2 & \ldots
&E_1\al_n \end{smallmatrix}\right)\right\}$ with the $|\al_i|= 1$ is
then a constellation  of unitary matrices.  In particular let the
$\al_j$ be $n^{th}$ of unity such that no $\al_j$ appears in more than
one block column. Then the quality of this constellation is $|\sin
\theta|$ where $\theta = \frac{\pi}{n}$. Many such different  constellations 
with good quality may be formed. 




\end{document}